\documentclass{article}

\usepackage[T1]{fontenc}
\usepackage{geometry}
\usepackage{enumitem}

\usepackage{amsmath,amssymb,amsthm}
\usepackage{tikz}
\usepackage{tikz-cd}
\usepackage{hyperref}
\usepackage{cancel}
\usepackage{thmtools}
\usepackage{thm-restate}

\usepackage[backend=bibtex,sortcites=true]{biblatex}

\newcommand{\di}{\mathrm{d}}
\newcommand{\wh}{\widehat}

\newcommand{\real}{\mathbb{R}}
\newcommand{\inte}{\mathbb{Z}}
\newcommand{\comp}{\mathbb{C}}
\newcommand{\octo}{\mathbb{O}}

\newcommand{\proj}{\mathbb{P}}

\newcommand{\so}{\mathfrak{so}}

\newcommand{\inv}{^{-1}}

\newcommand{\cspec}{C_{\mathrm{spec}}}
\newcommand{\ad}{\mathrm{ad}}
\newcommand{\rk}{\mathrm{rk}}
\newcommand{\vspan}{\mathrm{span}}
\newcommand{\calM}{\mathcal{M}}
\newcommand{\TD}{T}
\newcommand{\Spectrum}{\Sigma}

\DeclareMathOperator\Spin{Spin}

\DeclareMathOperator\SU{SU}
\DeclareMathOperator\Sp{Sp}

\DeclareMathOperator\vol{vol}

\DeclareMathOperator\End{End}

\DeclareMathOperator{\Tr}{Tr}
\DeclareMathOperator{\Cl}{Cl}

\newtheorem{theorem}{Theorem}
\newtheorem{lemma}[theorem]{Lemma}
\newtheorem{example}[theorem]{Example}
\newtheorem{proposition}[theorem]{Proposition}
\newtheorem{corollary}[theorem]{Corollary}

\theoremstyle{definition}
\newtheorem{definition}[theorem]{Definition}

\theoremstyle{remark}
\newtheorem{remark}[theorem]{Remark}

\setlist[enumerate, 1]{leftmargin = *, align = left}

\bibliography{zotero}
\title{Obstructions to Spin(7) Nahm transforms on tori}
\author{Spencer Whitehead \\ \small Duke University, \texttt{spencer.whitehead@duke.edu}}
\date{\today}

\begin{document}
\maketitle

\begin{abstract}
    The Nahm transform for 4-dimensional flat hyperk\"ahler tori is an isometry between the moduli space of anti-self-dual (ASD) instantons on a torus $T^4$ and the moduli space of ASD instantons on the dual torus $\wh{T^4}$ parametrising flat line bundles on $T^4$.
    This paper studies a generalised Nahm transform on an 8-dimensional torus with a $\Spin(7)$ structure.
    I construct instanton bundles with Dirac kernels respectively in positive and negative chiralities, demonstrating that the usual Nahm transform is not well-defined.
    I then define a notion of asymptotic holonomy for instantons twisted by a high power $k \gg 1$ of an instanton line bundle, and I show that this asymptotic holonomy reduces to $\Spin(7)$ to second order in $k$.
    Finally, I provide examples for which the asymptotic holonomy is $\mathfrak{u}(1)^{\oplus 4}$, and thus not $\Spin(7)$.
\end{abstract}

\section{Introduction}
\label{sec:introduction}

The Nahm transform for 4-dimensional tori is an isometry between the moduli space of gauge equivalence classes of anti-self-dual (ASD) instantons on a torus $T^4$ and the moduli space of ASD instantons on the dual torus $\wh{T^4}$.
The proof of the Nahm transform splits into three parts:
\begin{enumerate}[widest={\textbf{(Holonomy)}}]
    \item[\textbf{(Vanishing)}] For $E$ an irreducible ASD instanton on $T^4$ and $L_t, t \in \wh{T^4}$ a flat line bundle on $T^4$, the Dirac operator $D^+_t \colon \Gamma(S^+ \otimes E \otimes L_t) \to \Gamma(S^- \otimes E \otimes L_t)$ has zero kernel; thus $\wh{E}_t := \ker D^-_t \subset \Gamma(S^- \otimes E \otimes L_t)$ defines a bundle over $\wh{T^4}$ with a canonical connection $\wh{A}$.
    \item[\textbf{(Holonomy)}] The connection $\wh{A}$ is ASD.
    \item[\textbf{(Isometry)}] The Nahm transform is a duality; under the identification $\wh{\wh{T^4}} = T^4$, $E$ may be identified in a natural way with the Nahm transform of $\wh{E}$.
        The moduli spaces of ASD instantons over $T^4, \wh{T^4}$ have natural metrics, and the map $(E, A) \mapsto (\wh{E}, \wh{A})$ is an isometry.
\end{enumerate}
The statement \textbf{(Vanishing)} is an immediate consequence of the Lichnerowicz formula
\begin{equation}
    \label{eq:lichnerowicz}
    D_t^2  = \nabla_t^* \nabla_t + c(F_E)
\end{equation}
together with the algebraic fact that Clifford multiplication by an ASD form annihilates positive chirality spinors.
\textbf{(Holonomy)} follows from a simple curvature computation, and \textbf{(Isometry)} is more involved, using the Green's operator to construct harmonic spinors on the transformed bundle.

Various generalisations of the Nahm transform (many of which preceded the toric Nahm transform historically) are known; see~\cite{jardimSurveyNahmTransform2004} for a more complete survey. 
One important example is $\real^4/\Lambda \to (\real^4)^* / \Lambda^*$ for $\Lambda$ a subgroup of $\real^4$.
The $\Lambda = \{0\}$ case is related to the ADHM construction~\cite{atiyahConstructionInstantons1978}, while the monopole case $\Lambda = \real$ is the original transform giving Nahm's equations~\cite{nahmAllSelfdualMultimonopoles1983}.

One can also study the Nahm transform on hyperk\"ahler spaces that are not flat.
On asymptotically locally Euclidean (ALE) spaces, Kronheimer--Nakajima~\cite{kronheimerYangMillsInstantonsALE1990} developed a generalised ADHM construction. 
More recently, Cherkis--Larrain-Hubach--Stern developed a Nahm transform on multi-Taub-NUT spaces in a series of papers~\cite{cherkisInstantonsMultiTaubNUTSpaces2021,cherkisInstantonsMultiTaubNUTSpaces2021a,cherkisInstantonsMultiTaubNUTSpaces2025}.

Another direction for generalisation of the Nahm transform is to higher dimensions.
Let $(X^n, \Phi)$ be the data of an oriented Riemannian manifold and a closed $(n-4)$-form $\Phi \in \Omega^{n-4}(X)$.
A vector bundle $E \to X$ with connection $A$ is a \emph{generalised ($\Phi,\lambda$) instanton} if $\star (F_E \wedge \Phi) = \lambda F_E$.
A basic question is whether for any tuple $(X, \Phi, \lambda)$ there exists a dual pair $(\wh{X}, \wh{\Phi}, \wh{\lambda})$ such that the moduli space $\calM(X,\Phi,\lambda)$ of gauge equivalence classes of generalised instantons on $X$ is isometric to $\calM(\wh{X},\wh{\Phi}, \wh{\lambda})$.
In certain special cases, the answer is known to be affirmative---for example, on hyperk\"ahler tori, where the dual manifold is the usual dual torus.
The proof in this setting is a direct generalisation of the proof for the 4-dimensional toric Nahm transform (see~\cite{bartocciFourierMukaiNahmTransforms2009}).

Over the past 30 years, much effort has been put into the study of the generalised instanton problem in the special holonomies $G_2$ and $\Spin(7)$; Lewis~\cite{lewisSpin7Instantons} constructed the first examples of $\Spin(7)$ instantons on the compact irreducible $\Spin(7)$ manifold of Joyce obtained by resolving singularities of the quotient of an $8$-torus by a group of order 16.
The transversality theory and virtual dimension count for the moduli space of $\Spin(7)$ instantons were carried out by Mu\~noz--Shahbazi~\cite{munozTransversalityModuliSpace2018}.
Recently, various examples of $\Spin(7)$ instantons were constructed by Alonso--Madnick--Windes~\cite{alonsoGT2025}.

The focus of this paper is the Nahm transform on an 8-dimensional torus $T^8$ equipped with a Cayley form $\Phi$: in coordinates $(x^1, \ldots, x^8)$,
\begin{align}
    \Phi &= \di x^{1234} + \di x^{1256} - \di x^{1278} + \di x^{1357} + \di x^{1368} + \di x^{1458} - \di x^{1467}  \nonumber \\
         &\qquad - \di x^{2358} + \di x^{2367}  + \di x^{2457} + \di x^{2468}  - \di x^{3456} + \di x^{3478}  + \di x^{5678}. \label{eq:cayley}
\end{align}
As a map $\Lambda^2(\real^8)^* \to \Lambda^2(\real^8)^*$, the operator $\star (\cdot \wedge \Phi)$ has eigenvalues $1$ and $-3$, of respective multiplicities 21 and 7.
Denote by $\Omega^2_{21}(T^8)$ and $\Omega^2_7(T^8)$ the eigenspaces of $\star (\cdot \wedge \Phi)$ on $\Omega^2(T^8)$ corresponding to the eigenvalues $1$, $-3$ respectively.
A $\Spin(7)$ instanton is a bundle with connection $(E, A)$ such that the curvature $F_E$ satisfies $\star (F_E \wedge \Phi) = F_E$, and therefore $F_E \in \Omega^2_{21}(\ad E)$---that is, a $\Spin(7)$ instanton is defined to be a generalised $(\Phi, 1)$ instanton.

In the $\Spin(7)$ setting, the proofs of \textbf{(Vanishing)} and \textbf{(Holonomy)} used in the hyperk\"ahler setting fail immediately, both for the same reason: Clifford multiplication by the curvature $c(F_E) \colon S^\pm \to S^\pm$ does not satisfy a strong vanishing condition like $c(F_E) S^+ = 0$ in the ASD case.
In fact, \textbf{(Vanishing)} is false in the $\Spin(7)$ setting.
\begin{restatable}{theorem}{NoVanishing}
    \label{thm:van}
    There is no generic vanishing theorem for $\Spin(7)$ instantons: there exist $\Spin(7)$ instantons $E_1, E_2$ over a flat 8-torus such that the Dirac operator of $E_1$ (resp.\ $E_2$) has non-zero $\ker D \subset \Gamma(S^+ \otimes E_1)$ (resp.\ $\Gamma(S^- \otimes E_2)$).
\end{restatable}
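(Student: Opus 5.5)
The plan is to build both examples out of line bundles with constant-curvature (translation-invariant) connections on the flat torus $T^8=\real^8/\inte^8$, possibly summed with trivial flat bundles. For such bundles everything can be computed explicitly.

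\textbf{Step 1: the instanton condition.} Take a Hermitian line bundle $L$ whose curvature $F = i\omega$ has constant coefficients, where $\omega$ is an integral class in $H^2(T^8,\inte) \cong \Lambda^2(\inte^8)^*$. The $\Spin(7)$ instanton condition is the pointwise linear condition $\star(\omega\wedge\Phi)=\omega$, i.e.\ $\omega\in\Lambda^2_{21}$. The eigenspace $\Lambda^2_{21}\cong\mathfrak{spin}(7)$ is defined over $\mathbb{Q}$, because $\Phi$ has integer coefficients. Hence it contains integral forms, and rescaling any rational element gives an honest line bundle.

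\textbf{Step 2: the Dirac kernel.} With translation-invariant $F$, the twisted Dirac operator is a Landau-level type operator. Its square is
\[
D^2=\nabla^*\nabla + c(F).
\]
I will choose $\omega$ in block form, $\omega = \sum_{j=1}^4 \lambda_j\,\di x^{2j-1}\wedge\di x^{2j}$ after an orthonormal change of basis compatible with $\Phi$. Then $T^8$ splits, up to isogeny, as a product of four 2-tori, and $L$ is an external tensor product of line bundles of degrees $d_j\propto\lambda_j$. The instanton condition becomes a linear condition on $(\lambda_1,\dots,\lambda_4)$. For the standard $\Phi$ in \eqref{eq:cayley}, the $\Lambda^2_7$ components of the $\di x^{12},\di x^{34},\di x^{56},\di x^{78}$ span are proportional to $\lambda_1+\lambda_2+\lambda_3+\lambda_4$ and to the other sign combinations. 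I will compute this precisely from \eqref{eq:cayley}, expecting a condition like
\[
\lambda_1-\lambda_2-\lambda_3-\lambda_4=0
\]
or a variant with different signs. The spinor bundle then factors as $S^+\oplus S^-=\bigotimes_{j=1}^4 (S_j^+\oplus S_j^-)$. Moreover, $\ker D$ is the tensor product of the 2-dimensional Dirac kernels. Each factor contributes $|d_j|$ sections, of chirality $\operatorname{sign}(d_j)$ (holomorphic sections of a positive line bundle, or antiholomorphic ones for a negative one). So $\ker D\cong\bigotimes_j \comp^{|d_j|}$, and it lies entirely in the total chirality $\prod_j \operatorname{sign}(d_j)$.

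\textbf{Step 3: choosing signs.} The kernel is nonzero as soon as all $d_j\neq 0$, and its chirality is governed by the parity of the number of negative $d_j$. So I will pick two admissible solutions of the linear constraint: one with an even number of negative signs, e.g.\ $(3,1,1,1)$ against $\lambda_1=\lambda_2+\lambda_3+\lambda_4$, and one with an odd number. If the constraint only allows one parity for $\pm$ entries, I will instead use $E_2=L^{-1}$, or equivalently reverse orientation via $x^1\mapsto -x^1$ composed with a symmetry preserving $\Phi$. Alternatively I will permute which coordinate pairs are used, taking a different Cayley-compatible splitting $\real^8=\bigoplus\real^2$, so that the sign pattern changes. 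Direct sums with trivial bundles are allowed if higher rank is desired.

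\textbf{Main obstacle.} The hardest part is checking that the splitting is compatible with both the spin structure and $\Phi$. Concretely, I must confirm that the induced product spinor decomposition matches the $\Spin(7)$ chirality convention, under which $S^+=\real\oplus\Lambda^2_7$ contains the $\Phi$-parallel spinor. I also need that both chirality outcomes actually occur among solutions of the instanton constraint. The relative sign between the Kähler-type forms and $\Phi$ decides which parity is allowed. I expect one parity to come directly from the $\Phi$-constraint and the other to require choosing a pair decomposition on which $\Phi$ restricts with opposite sign. I would check this first by explicit computation with \eqref{eq:cayley}.
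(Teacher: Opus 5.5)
Your plan works once the constraint is actually computed, and it takes a genuinely different route from the paper.

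\textbf{Comparison with the paper.} The paper only needs integral classes $\omega_\pm\in H^2_{21}$ with $\omega_+^4>0>\omega_-^4$. It builds instanton line bundles $L_\pm$ from them and applies Braverman's asymptotic Kodaira-type vanishing theorem to $E\otimes L_\pm^{\otimes k}$, for an arbitrary $\Spin(7)$ instanton $E$ and $k\gg0$; this confines the kernel to one chirality. The index formula, whose leading term is $\propto k^4\int\omega_\pm^4$, then shows the kernel is nonzero.

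You instead separate variables on $(T^2)^4$ and compute the kernel exactly as $\bigotimes_j\ker D_j$, of dimension $\prod_j|d_j|$ and chirality $\prod_j\operatorname{sign}(d_j)$. Since $\omega^4=24\lambda_1\lambda_2\lambda_3\lambda_4\,\mathrm{vol}$, your parity criterion is exactly the paper's orientation criterion. Your version is elementary, explicit, and needs no limit $k\to\infty$. The paper's is less explicit but stronger: any instanton, twisted by a high power of $L_\pm$, has kernel in the prescribed chirality. So the failure of vanishing is robust and not an artifact of split line bundles.

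\textbf{Corrections.}

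The constraint. In \eqref{eq:cayley}, the eight mixed terms of $\Phi$ each contain one index from each coordinate pair, so they drop out. On $\vspan\{\di x^{12},\di x^{34},\di x^{56},\di x^{78}\}$, the $-3$-eigenvector of $\star(\cdot\wedge\Phi)$ is $\di x^{12}-\di x^{34}-\di x^{56}+\di x^{78}$. Hence the instanton condition is $\lambda_1-\lambda_2-\lambda_3+\lambda_4=0$.

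Your sample $(3,1,1,1)$ violates this constraint. However, $(1,1,1,1)$ and $(3,1,1,-1)$ satisfy it and have opposite parity. In fact any constraint $\sum_j\epsilon_j\lambda_j=0$ with $\epsilon_j=\pm1$ admits both parities: put $\lambda_j=\epsilon_j\mu_j$ and take $\mu=(1,1,-1,-1)$ or $(1,1,1,-3)$.

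Your ``main obstacle''. It disappears, and no matching of chirality conventions is needed. The statement only asks for two instantons with nonzero kernels of opposite chiralities.

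Your fallbacks would not have worked.
\begin{itemize}
\item $L^{-1}$ flips all four signs, so it preserves parity; equivalently $(-\omega)^4=\omega^4$.
\item No orientation-reversing map preserves $\Phi$, because $\Phi\wedge\Phi=14\,\mathrm{vol}$.
\end{itemize}

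Minor points.
\begin{itemize}
\item Use the coordinate pairs themselves, so that $\inte^8$ splits and no isogeny is involved.
\item Drop the remark about adding trivial summands. A trivial flat summand contributes constant spinors of both chiralities, which spoils the single-chirality conclusion.
\end{itemize}
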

Theorem~\ref{thm:van} is proved in Section~\ref{sec:vanishing} by directly constructing examples of such instantons $E_1, E_2$ on a square torus using a $\Spin(7)$ analogue of the Kodaira vanishing theorem.

Given a $\Spin(7)$ instanton line bundle $L$ that is positive (in the sense that $i F_L$ defines a symplectic form whose induced orientation agrees with the orientation on $T^8$) we define in Section~\ref{sec:curvature} an \emph{asymptotic Nahm transform} of the bundle $E(k) := E \otimes L^{\otimes k}$ for $k \gg 0$, denoted $\wh{E(k)}$.
The transform $\wh{E(k)}$ has a canonical connection $\wh{A(k)}$, and we define the \emph{$\ell$th asymptotic holonomy $H'_\ell$} to be the orthogonal complement to the space of constant 2-forms whose contraction with the curvature $\wh{F(k)}$ has, pointwise, norm of order $O(k^{\tfrac12-\ell})$ as $k \to \infty$.
The \emph{asymptotic holonomy} $H' = \lim_{\ell \to \infty} H'_\ell$ is therefore the space of forms to which $\wh{F(k)}$ reduces modulo terms decaying faster than $k^{-\ell}$ for every $\ell$ as $k\to \infty$.

Sections~\ref{sec:pseudo} and~\ref{sec:mellin} develop heat kernel estimates used in Section~\ref{sec:curvature} to approximate the curvature $\wh{F(k)}$.
These approximations are used to prove the following main theorem of the paper.

\begin{theorem}
    Let $(T^8, \omega, L)$ be a principally polarised abelian variety with K\"ahler form $\omega$, line bundle $L$ satisfying $F_L = -i\omega$, and $\Spin(7)$ structure such that $L$ is a $\Spin(7)$ instanton.
    Let $(E,A)$ be a $\Spin(7)$ instanton on $T^8$.
    The asymptotic Nahm transform $(\wh{E(k)}, \wh{A(k)}) \to \wh{T^8}$ of $E(k) := E \otimes L^{\otimes k}$ has the following asymptotic reduction of holonomy as $k \to \infty$:
    \[
        H'_0 \subset \langle \omega \rangle, \qquad
        H'_1 \subset \Lambda^2_{21} (\real^8)^*,
   \]
   where $\omega$ is identified with an element of $\Lambda^2 (\real^8)^*$ by the canonical identification with a 2-form on the dual abelian variety and then using that it is covariant constant.

   Moreover, there exist examples of $\Spin(7)$ instantons for which $H' = H'_2 = \mathfrak{u}(1)^{\oplus 4}$; in particular, the transform is not `asymptotically $\Spin(7)$' for any choice of $\Spin(7)$ structure on the dual torus because the (Lie algebra) rank of $H'$ satisfies $\rk(H') = 4 > \rk(\so(7))  = 3$. 
\end{theorem}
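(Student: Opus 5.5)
The plan is to write the curvature of the transformed connection through the Green's operator and then expand the heat kernel semiclassically in $k$. Fix $t \in \wh{T^8}$ and set $E(k)_t := E \otimes L^{\otimes k} \otimes L_t$. For $k \gg 0$, positivity of $L$ and the Lichnerowicz formula \eqref{eq:lichnerowicz} show that one chirality of $D_t$ is injective. The dominant term is $c(F_{L^k}) = -ik\,c(\omega)$, and since $L$ is a $\Spin(7)$ instanton it acts on one chirality with a spectral gap of order $k$. So $\wh{E(k)}_t := \ker D_t$ in the other chirality forms a bundle, with the projected connection $\wh{A(k)}$. The standard Nahm computation then gives
\[
    \wh{F(k)}_{ab} = P\,\big(c(\di x^a)\, G_t\, c(\di x^b) - c(\di x^b)\, G_t\, c(\di x^a)\big)\,P,
\]
where $P$ is the projection onto $\ker D_t$, $G_t = (D_t^* D_t)^{-1}$, and $\di x^a$ is the coordinate dual to $t$. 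The whole problem is therefore to expand $P c(\di x^a) G_t c(\di x^b) P$ in powers of $k^{-1/2}$ pointwise in $t$.

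To get this expansion I would write $G_t = \int_0^\infty e^{-sD_t^*D_t}\,\di s$ and apply the heat kernel estimates of Sections~\ref{sec:pseudo} and~\ref{sec:mellin}. After rescaling to the magnetic length $k^{-1/2}$, $D_t^*D_t$ becomes a harmonic oscillator (a Landau Hamiltonian on $\comp^4$ for $\omega$) plus the perturbations $c(F_E)$ and $c(F_L)$. Expanding in $k^{-1/2}$ gives the following.

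\textbf{Order zero.} Only the flat Landau model survives. Its $U(4)$-invariance forces the antisymmetrised Green's term to be proportional to $\omega_{ab}\,\mathrm{Id}$. This gives $H'_0 \subset \langle \omega\rangle$.

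\textbf{Next order.} Here the model operator couples to $c(F_E)$, both through the Lichnerowicz term and through the first Duhamel correction. Because $F_E \in \Omega^2_{21}$, and because Clifford multiplication by $\Lambda^2_{21} \cong \mathfrak{spin}(7)$ preserves the $\Spin(7)$-invariant decomposition of $S^\pm$, the correction is $\Spin(7)$-equivariant in $(a,b)$. I expect that, after pairing with the fixed $\Lambda^2_7$ projector, any $\Lambda^2_7$ component can only come from $\omega$, whose $\Lambda^2_7$ part vanishes because $L$ is an instanton. Hence $H'_1 \subset \Lambda^2_{21}$.

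The main obstacle is making this representation-theoretic bookkeeping rigorous. The Landau model is only $U(4)$-invariant, while the curvature is only $\Spin(7)$-constrained, so the invariance argument has to be run on $U(4) \cap \Spin(7) = \SU(4)$. I would first check that $\Lambda^2_7$ splits under $\SU(4)$ as $\langle\omega\rangle \oplus \Lambda^{2,0}$, and then use the $\SU(4)$ weights to show that the $\Lambda^{2,0}$ component of the first correction vanishes. Remainder control, meaning uniform $O(k^{1/2-\ell})$ bounds, comes directly from the earlier heat kernel estimates.

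For the examples I would take $T^8 = T^2 \times T^2 \times T^2 \times T^2$ with product complex structure and $E$ a sum of line bundles $L_1^{m_1}\boxtimes\cdots$ with curvature of type $(1,1)$, diagonal in the four factors, chosen so that they are $\Spin(7)$ instantons. Everything then separates: the kernel of $D_t$ is a tensor product of lowest Landau levels on each factor. The transformed curvature is block-diagonal, with each block lying in the $\mathfrak{u}(1)$ spanned by the area form of that factor, and the four coefficients have independent, nonproportional $k$-dependence at order $k^{-1}$, which can be computed exactly by Mehler's formula. This gives $H' = H'_2 = \mathfrak{u}(1)^{\oplus 4}$. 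Since every abelian subalgebra of $\so(7)$ has dimension at most $3$, $H'$ does not lie in any $\Spin(7)$ subalgebra.
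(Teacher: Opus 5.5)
The overall architecture matches the paper: curvature via the Green's operator, a semiclassical expansion, and the order-zero term forced into $\langle\omega\rangle$ by the symmetry of the flat model. The step $H'_1\subset\Lambda^2_{21}$, however, is the real content of the theorem, and your argument for it rests on a false identification.

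The hypothesis that $L$ is a $\Spin(7)$ instanton says exactly that $\omega\in\Lambda^2_{21}\cong\so(7)$, as you note one sentence earlier. So $\omega$ is \emph{not} the K\"ahler form of an $\SU(4)$-structure inside $\Spin(7)$. In particular $\Lambda^2_7$ does not contain $\langle\omega\rangle$, and $U(4)_\omega\cap\Spin(7)$ is not $\SU(4)$. A complex structure lying in $\so(7)$ is $\Spin(7)$-conjugate to $\gamma^{12}$, and its stabiliser in $\Spin(7)$ has Lie algebra $\so(2)\oplus\so(5)$. Under it, $\Lambda^2_7=\langle\gamma^1,\gamma^2\rangle\oplus\langle\gamma^3,\dots,\gamma^7\rangle$. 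The first summand anticommutes with $\omega$, so it is $J$-anti-invariant; the second commutes with $\omega$ and lies in $\mathfrak{su}(4)_\omega$.

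This has two consequences for your plan. First, even a proof that ``the $\Lambda^{2,0}$ component of the first correction vanishes'' would leave a five-dimensional part of $\Lambda^2_7$ untouched. Second, that claim is false in general. Take a line bundle $E$ with constant curvature $-i\beta$, where $\beta$ anticommutes with $\omega$ (e.g.\ a multiple of $\gamma^{13}$). The transformed curvature is proportional to $(k\omega+\beta)^{-1}=-k^{-1}\omega-k^{-2}\beta+O(k^{-3})$, so the first correction is $J$-anti-invariant and nonzero; it simply lies in $\Lambda^2_{21}$. Your earlier assertion that the correction is $\Spin(7)$-equivariant in $(a,b)$ is also false. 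The Landau model and its dominant $c(\omega)$-eigenline are invariant only under the stabiliser of $\omega$.

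The missing idea is the paper's reduction to a rank-one eigenline. By Lemma~\ref{lem:expsmall}, only terms passing through the $-4i$ eigenline of $c(\omega)$ survive, so each term collapses to a scalar $\tau(\cdots)$. The Wick formula (Lemma~\ref{lem:matrixvan}) then kills it whenever $\eta\perp\mathcal{P}_\ell(V)$. For $\ell=1$ the skew part of $\mathcal{P}_1(V)$ is $\vspan\{\omega,V,[\omega,V],\omega V\omega\}$. For $V=\Lambda^2_{21}$ this equals $\Lambda^2_{21}$, because $\omega\in\so(7)$ and $\omega=\exp(\tfrac{\pi}{2}\omega)$ lies in the image of $\Spin(7)$. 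Your symmetry idea can be repaired in this direction: Schur's lemma for $U(4)_\omega$ shows that a first-order pairing between $\eta$ and $F_E$ factors through exactly this span. That repair needs $U(4)_\omega$ together with $\omega\in\so(7)$, not $\SU(4)$.

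The example also needs repair. Each block's coefficient is proportional to $1/(k+m_j)$, so the four coincide at order $k^{-1}$; that is just $H'_0=\langle\omega\rangle$. Directions separate only through the conditions $\sum_j\eta_j m_j^q=0$ for $q\le\ell$. A single line bundle gives only three such conditions at $\ell=2$, so $\dim H'_2\le 3$. Getting $H'=H'_2$ therefore needs at least two summands, with explicit degrees checked against the instanton condition. The paper's choice $E=L_-\oplus L_+$ with $c_1(L_\pm)=\gamma^{34}\pm\gamma^{56}$ and twisting class $\gamma^{12}$ is of your diagonal type. For it, your exact constant-curvature computation would be a legitimate alternative to the paper's $\mathcal{P}_\ell$ bound plus index-theorem check.
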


\paragraph*{Acknowledgements} I thank my advisor Mark Stern for posing to me the question of $\Spin(7)$ Nahm transforms, as well as for many helpful discussions on the content of this paper.

\section{Algebraic preliminaries}
\label{sec:algebra}

The Lie algebra $\so(7)$ of $\Spin(7)$ is a 21-dimensional subalgebra of the Lie algebra of skew-symmetric $8 \times 8$ real matrices.
Fix a choice of $\gamma$ matrices for the $\Spin(7)$ structure: anticommuting matrices $\gamma^1, \ldots, \gamma^7 \in \so(8)$ pairwise orthogonal with respect to the inner product $\langle A, B \rangle = \Tr(A^t B)$ with the property that $(\gamma^i)^2 = -I$ and $\gamma^{jk} := \frac{1}{2} [\gamma^j, \gamma^k] = \gamma^j \gamma^k$ are an orthogonal basis for $\so(7)$ for $1 \le j < k \le 7$.\footnote{With an identification $\real^8 = \octo$ fixed, one could take $\gamma^j$ to be the right multiplication matrix by the $j$th imaginary octonion, for example.}
The Cayley form is $\Phi = -\frac{1}{6} \sum_{j=1}^7 \gamma^j \wedge \gamma^j$.
As skew-symmetric matrices, we can interpret $\gamma^j, \gamma^{jk}$ as 2-forms on $\real^8$, also denoted by $\gamma^j, \gamma^{jk}$ when no confusion will arise.
The space of forms spanned by $(\gamma^j)_{j=1}^7$ is denoted by $\Lambda^2_7$ and the space of forms spanned by $(\gamma^{jk})_{j,k=1, j\neq k}^7$ is denoted by $\Lambda^2_{21}$.

Given any multi-index $I = (i_1,\ldots,i_k)$, let $\gamma^I := \gamma^{i_1} \cdots \gamma^{i_k}$.
Denote by
\begin{equation}
    \label{eq:matdecomp}
    M_{8\times 8}(\real) = M_1 \oplus M_7 \oplus M_{21} \oplus M_{35}
\end{equation}
the decomposition of the algebra of $8 \times 8$ real matrices into irreducible $\so(7)$ representations of dimensions $1, 7, 21, 35$, respectively spanned by $I$, the $\gamma^j$, the $\gamma^{jk}$, and the $\gamma^{jk\ell}$ for $j < k < \ell$. 

The even complexified Clifford algebra of $\real^8$ splits as $\Cl^0(\real^8) \otimes_{\real} \comp = \End_{\comp}(S^+) \oplus \End_{\comp}(S^-)$ for two complex 8-dimensional $\Spin(8)$-representations $S^+, S^-$, the spinor representations.
There is a further decomposition $S^+ = S^+_1 \oplus S^+_7$ into irreducible $\Spin(7)$ representations of respective dimensions $1, 7$.
The following properties of these representations with respect to the action of the Clifford algebra are well known (see~\cite[Ch.~13]{joyceCompactManifoldsSpecial2000}, for example).
\begin{lemma}
    \label{lem:alg:spin7basic}
    All of the following inclusions hold.
    \begin{align*}
        c(\Lambda^2_7) S^+_1 &\subset S^+_7, \\
        c(\Lambda^2_7) S^+_7 &\subset S^+_1, \\
        c(\Lambda^2_{21}) S^+_1 &= 0, \\
        c(\Lambda^2_{21}) S^+_7 &\subset S^+_7.
    \end{align*}
\end{lemma}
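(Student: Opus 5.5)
The plan is to realise everything inside the concrete model fixed above: identify $\real^8$ (complexified) with $S^+$, with the seven $\gamma^j$ generating the $\Spin(7)$ action. The key structural fact is the representation theory of $\Spin(7)$. The subgroup $\Spin(7) \subset \Spin(8)$ is the stabiliser of a unit spinor $\eta \in S^+$, so $S^+_1 = \comp\eta$ and $S^+_7 = \eta^\perp$. Moreover, $\Lambda^2(\real^8)^* \cong \so(8)$ splits under $\Spin(7)$ as $\so(7) \oplus \real^7$, which is exactly $\Lambda^2_{21} \oplus \Lambda^2_7$.

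First I handle the two $\Lambda^2_{21}$ inclusions. Clifford multiplication by a 2-form $\alpha$ on $S^+$ is, up to a factor $\tfrac12$, the infinitesimal spin action of $\alpha \in \so(8)$ on $S^+$. Since $\Lambda^2_{21}$ is the Lie algebra of the stabiliser of $\eta$, we get $c(\Lambda^2_{21})\eta = 0$, which gives $c(\Lambda^2_{21}) S^+_1 = 0$. Clifford multiplication by a real 2-form is skew-adjoint on $S^+$. Hence for $\alpha \in \Lambda^2_{21}$, $\psi \in S^+_7$ we have $\langle c(\alpha)\psi, \eta\rangle = -\langle \psi, c(\alpha)\eta\rangle = 0$, so $c(\Lambda^2_{21}) S^+_7 \subset S^+_7$.

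Next I handle $\Lambda^2_7$. The map $\Lambda^2_7 \to S^+$, $\alpha \mapsto c(\alpha)\eta$, is $\Spin(7)$-equivariant. Its image cannot have a component in $S^+_1$: composing with the projection to $S^+_1$ gives an equivariant map from the irreducible 7-dimensional representation $\Lambda^2_7$ to the trivial representation, and such a map vanishes. Alternatively, in the model, each $\gamma^j$ is skew, so $\langle \gamma^j\eta,\eta\rangle = 0$. This gives $c(\Lambda^2_7) S^+_1 \subset S^+_7$. For the second inclusion, I take $\alpha \in \Lambda^2_7$ and $\psi \in S^+_7$ and test against $\phi \in S^+_7$. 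Using skew-adjointness, $\langle c(\alpha)\psi, \phi\rangle$ defines an equivariant map $\Lambda^2_7 \otimes S^+_7 \to S^+_7$, which I need to show vanishes.

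The main obstacle is this last step. Representation theory alone is not enough: $\Lambda^2_7 \otimes \Lambda^2_7 \supset \Lambda^2(\real^7) \oplus \ldots$ does contain a copy of $\real^7$ via the octonionic cross product, so Schur's lemma does not rule out a nonzero map. I would therefore argue concretely in the model. Take $\eta$ to correspond to $1 \in \octo$, and use the footnote's choice of $\gamma^j$ as right multiplication by $e_j$. In the appropriate identification, $\gamma^j$ acting on spinors sends $1 \mapsto e_j$ and sends $e_k \mapsto \pm\delta_{jk}\cdot 1$ plus possibly an $e_\ell$-component.

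I then compute $c(\gamma^j)$ on $S^+$ using the standard normalisation $c(\gamma^j) = \mathrm{const}\cdot\gamma^j$ on $S^+ \cong \real^8 \otimes \comp$. This identification, under which 2-forms in $\Lambda^2_7$ act by the $\gamma^j$ themselves, is the one consistent with Joyce [Ch.~13]. The check that the $S^+_7 \to S^+_7$ block vanishes reduces to $\Phi$-type identities: $\sum$ terms with $\gamma^j e_k$ having an $e_\ell$-component correspond to the cross-product part. The claim is that this part is cancelled in $c$ by the correct normalisation, because $c$ of the 2-form $\gamma^j$ is the spin action of the corresponding element of $\so(8)$, which involves triality rather than literal matrix multiplication.

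I expect this triality bookkeeping to be the hard part. If it proves unwieldy, my fallback is to cite [Ch.~13] of Joyce directly, where these identities are recorded.
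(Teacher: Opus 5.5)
The paper does not prove this lemma; it records all four inclusions as well known and cites Joyce, Ch.~13. Your arguments for $c(\Lambda^2_{21})S^+_1=0$, $c(\Lambda^2_{21})S^+_7\subset S^+_7$ and $c(\Lambda^2_7)S^+_1\subset S^+_7$ (stabiliser of $\eta$, skew-adjointness, Schur into the trivial summand) are correct and self-contained.

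The gap is the remaining inclusion $c(\Lambda^2_7)S^+_7\subset S^+_1$. You leave it unproved, and your reason for abandoning representation theory there is mistaken: the octonionic cross product on $\real^7$ is only $G_2$-equivariant. Under $\Spin(7)$, both $\Lambda^2_7$ and $S^+_7$ are the $7$-dimensional vector representation pulled back along $\Spin(7)\to\mathrm{SO}(7)$. Under $\mathrm{SO}(7)$,
\[
\real^7\otimes\real^7=\real\oplus\Lambda^2\real^7\oplus S^2_0\real^7,\qquad \dim = 1+21+27,
\]
with $\Lambda^2\real^7\cong\so(7)$ and $S^2_0\real^7$ irreducible, so there is no summand isomorphic to $\real^7$. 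The splitting of $\Lambda^2\real^7$ as $7\oplus 14$ that you have in mind only appears after restricting to $G_2$. Now consider the map sending $\alpha\in\Lambda^2_7$ to the $S^+_7\to S^+_7$ block of $c(\alpha)$. It is a $\Spin(7)$-equivariant map from $\real^7$ into $\End(S^+_7)\cong(\real\oplus\Lambda^2\real^7\oplus S^2_0\real^7)\otimes\comp$, hence zero by Schur. That is exactly the missing inclusion, and with this one step your proof is complete.

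The concrete fallback would not have worked as described. In the footnote's model, $\octo$ is the vector representation $\real^8$, on which $\Spin(7)$ acts irreducibly. The element $1\in\octo$ is fixed only by $G_2$, not by $\Spin(7)$. Identifying it with the $\Spin(7)$-invariant spinor $\eta$ is precisely what imports the $G_2$ cross product.

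Moreover, no identification $S^+\cong\real^8\otimes\comp$ makes $c(\gamma^j)$ a constant multiple of $\gamma^j$. The matrix $\gamma^j$ is a complex structure, hence invertible. By contrast, $c(\gamma^j)$ on $S^+$ has eigenvalues $\pm4i$ with multiplicity one and $0$ with multiplicity six, so it has rank $2$; this is the eigenvalue count the paper records just before Lemma~\ref{lem:alg:seigen}. The $e_\ell$-components you found are therefore genuine features of right multiplication on $\octo$, not something a normalisation or triality bookkeeping will cancel.
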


Let $e_1, \ldots, e_8$ be an orthonormal basis of $\real^8$.
For a matrix $M = M_{ab} e_a \otimes e_b^*$, define $c(M) = \frac{1}{2} M_{ab} c(e_a) c(e_b)$.
Let $J$ be a complex structure on $\real^8$ with corresponding 2-form $\omega = \langle J\cdot, \cdot\rangle$.
The Clifford multiplication $c(\omega)$ acting on the spinors decomposes $S^+$ into orthogonal eigenspaces with corresponding eigenvalues $-4i, 0, 4i$ of multiplicities $1, 6, 1$ (see~\cite[Lemma~7.10]{bravermanVanishingTheoremsKernel1998}, for example).
Likewise, $S^-$ decomposes into orthogonal $c(\omega)$-eigenspaces with eigenvalues $-2i, 2i$, both of multiplicity 4.
Let $P_{i\mu}$ denote the orthogonal projection onto the $i\mu$-eigenspace of $c(\omega)$.

\begin{lemma}
    \label{lem:alg:seigen}
    For $0 \neq v \in S^+_1$, the set $\{c(\gamma^j)v\}_{j=1}^7$ is an orthogonal basis of $S^+_7$ and $c(\gamma^j)c(\gamma^k)v = -16\delta_{jk} v$.
\end{lemma}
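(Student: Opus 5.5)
The plan is to use Lemma~\ref{lem:alg:spin7basic} to reduce everything to scalar computations on the line $S^+_1$. Then I would use the eigenvalue description of Clifford multiplication by a complex structure to pin down the scalar.

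\textbf{Step 1: the diagonal terms.}
Each $\gamma^j$ is a skew-symmetric matrix with $(\gamma^j)^2=-I$, so it is an orthogonal complex structure on $\real^8$. Its associated 2-form is $\gamma^j$ itself, up to the sign convention in $\omega=\langle J\cdot,\cdot\rangle$, which is irrelevant here. By the cited eigenvalue fact, $c(\gamma^j)$ acts on $S^+$ with eigenvalues $-4i,0,4i$, so $c(\gamma^j)^2$ has eigenvalues in $\{-16,0\}$. By Lemma~\ref{lem:alg:spin7basic}, $c(\gamma^j)$ maps $S^+_1\to S^+_7\to S^+_1$. Hence $c(\gamma^j)^2$ preserves the complex line $S^+_1$, and $c(\gamma^j)^2v=\lambda_j v$ with $\lambda_j\in\{-16,0\}$.

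The same holds for any unit combination $\alpha=\sum a_j\gamma^j$ with $a\in S^6\subset\real^7$. Anticommutativity gives $\alpha^2=-I$, so $\alpha$ is again a complex structure. Since $\Spin(7)$ fixes $v$ and acts on $\Lambda^2_7\cong\real^7$ through $SO(7)$, transitively on the unit sphere, the scalar $\lambda_\alpha$ is a single constant $\lambda$.

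Suppose $\lambda=0$. Then $c(\alpha)v=0$ for all $\alpha\in\Lambda^2_7$, because $c(\alpha)$ is skew-adjoint and so $|c(\alpha)v|^2=-\langle c(\alpha)^2v,v\rangle=0$. Together with $c(\Lambda^2_{21})S^+_1=0$, this gives $c(\Lambda^2)v=0$. That is impossible, since $c(\Lambda^2)\cong\so(8)$ acts on the irreducible nontrivial representation $S^+$ without nonzero fixed vectors. Hence $\lambda=-16$, which gives $c(\gamma^j)^2v=-16v$.

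\textbf{Step 2: the off-diagonal terms, $j\ne k$.}
Apply Step 1 to the unit element $\alpha=(\gamma^j+\gamma^k)/\sqrt2$. This yields the symmetric relation
\[
c(\gamma^j)c(\gamma^k)v+c(\gamma^k)c(\gamma^j)v=0 .
\]
For the antisymmetric part, $c$ is a Lie algebra homomorphism up to a constant factor; with this normalisation $[c(A),c(B)]=2c([A,B])$. Therefore
\[
[c(\gamma^j),c(\gamma^k)]=2c([\gamma^j,\gamma^k])=4c(\gamma^{jk})\in c(\Lambda^2_{21}),
\]
which annihilates $v$ by Lemma~\ref{lem:alg:spin7basic}. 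Adding the two relations gives $c(\gamma^j)c(\gamma^k)v=0$. This proves $c(\gamma^j)c(\gamma^k)v=-16\delta_{jk}v$.

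\textbf{Step 3: the basis.}
Skew-adjointness of Clifford multiplication by 2-forms gives
\[
\langle c(\gamma^j)v,c(\gamma^k)v\rangle=-\langle v,c(\gamma^j)c(\gamma^k)v\rangle=16\delta_{jk}|v|^2 .
\]
So the seven vectors $c(\gamma^j)v$ are nonzero, pairwise orthogonal, and lie in $S^+_7$. Since $S^+_7$ has complex dimension $7$, they form an orthogonal basis.

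The main obstacle is the nonvanishing argument in Step 1, which rules out $\lambda=0$. I would also double-check there the normalisation constant relating $c$ to the spin representation, since Step 2 depends on it.
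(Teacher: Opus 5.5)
Your proof is correct, but it runs in the opposite order from the paper's. The paper handles $j\neq k$ first, by a real-versus-imaginary trick. Writing $c(\gamma^j)=\tfrac14[c(\gamma^{jk}),c(\gamma^k)]$ and using $c(\gamma^{jk})v=0$, it shows $\langle c(\gamma^j)v,c(\gamma^k)v\rangle$ is purely imaginary. Writing the same quantity as $-\tfrac12\langle\{c(\gamma^k),c(\gamma^j)\}v,v\rangle$ shows it is real, so it vanishes. Then $c(\gamma^k)c(\gamma^j)v$ lies in $S^+_1$ and is orthogonal to $v$, hence is $0$. Only afterwards does the paper get the diagonal value: $c(\gamma^j)$ kills $\vspan\{c(\gamma^k)v\}_{k\neq j}$, which it takes to be the whole $6$-dimensional $0$-eigenspace, so $v$ lies in the sum of the $\pm4i$ eigenspaces.

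You instead get the diagonal value first, from three ingredients:
\begin{itemize}
\item the dichotomy $\lambda\in\{-16,0\}$;
\item $\Spin(7)$-homogeneity of the unit sphere in $\Lambda^2_7$;
\item irreducibility of $S^+$ under $\so(8)$, which rules out $\lambda=0$.
\end{itemize}
You then obtain the off-diagonal terms by polarising with $(\gamma^j+\gamma^k)/\sqrt2$ and using $[c(\gamma^j),c(\gamma^k)]\in c(\Lambda^2_{21})$.

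The paper's route is more hands-on and avoids any orbit argument. However, it tacitly assumes $c(\gamma^k)v\neq 0$ when it calls that span $6$-dimensional. Your irreducibility step supplies exactly this nondegeneracy, so your argument closes a point the paper leaves implicit. If you want to avoid invoking transitivity, connectedness of $S^6$ and continuity of $\alpha\mapsto\langle c(\alpha)^2v,v\rangle/|v|^2$ give the constancy of $\lambda_\alpha$ just as well.

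Your worry about the normalisation in Step 2 is unnecessary. The paper's convention is $[c(A),c(B)]=-2c([A,B])$, the opposite sign to yours. But you only use that $[c(\gamma^j),c(\gamma^k)]$ is a multiple of $c(\gamma^{jk})\in c(\Lambda^2_{21})$, so the constant plays no role.
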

\begin{proof}
    In general, for skew-symmetric matrices $A, B$ it holds that $[c(A), c(B)] = -2c([A,B])$.
    In particular, $[c(\gamma^{jk}), c(\gamma^k)] = 4c(\gamma^j)$ and thus
    \begin{align*}
        \langle c(\gamma^j)v, c(\gamma^k)v \rangle &= \frac{1}{4}\langle [c(\gamma^{jk}), c(\gamma^k)] v, c(\gamma^k) v \rangle \\
                                                   &= \frac{1}{4}\langle c(\gamma^{jk}) c(\gamma^k) v, c(\gamma^k) v \rangle. \tag{$c(\gamma^{jk})v = 0$}
    \end{align*}
    Since $c(\gamma^{jk})$ is skew-adjoint, it follows that $\langle c(\gamma^j)v, c(\gamma^k)v\rangle$ is imaginary.
    On the other hand, we have
    \begin{align*}
        \langle c(\gamma^j)v, c(\gamma^k)v\rangle &= -\langle c(\gamma^k) c(\gamma^j) v, v\rangle \\
                                                  &= -\frac{1}{2} \langle  \{ c(\gamma^k), c(\gamma^j)\} v, v \rangle
    \end{align*}
    is real, since $[c(\gamma^k), c(\gamma^j)] v = 0$ and $\{c(\gamma^k), c(\gamma^j)\}$ is self-adjoint.
    Thus $c(\gamma^j)v, c(\gamma^k)v$ are orthogonal for all $j \neq k$.
That $c(\gamma^k)c(\gamma^j)v = 0$ for $k \neq j$ follows since this quantity is orthogonal to $v$, but $c(\gamma^k)c(\gamma^j)v \in S^+_1$ by Lemma~\ref{lem:alg:spin7basic}.

    For a given $j$, since $\gamma^j$ is a complex structure and $c(\gamma^j)$ acts as 0 on the 6-dimensional space $\vspan \{ c(\gamma^k)v : k \neq j\}$, the orthogonal complement $\vspan\{v, c(\gamma^j)v\}$ is the sum of the $-4i$ and $4i$ eigenspaces of $c(\gamma^j)$, on which $c(\gamma^j)^2 = -16$.
\end{proof}

Let $T \in \End(S)$ be any spinor endomorphism, and let $\omega$ be a 2-form corresponding to a complex structure on $\real^8$.
The projection $P_{-4i}$ onto the $-4i$ eigenspace of $c(\omega)$ on $S$ is rank 1; thus there exists a scalar $\tau(T)$ such that $P_{-4i} T P_{-4i} = \tau(T) P_{-4i}$.
Since the complexified Clifford algebra is the endomorphism algebra of the spinors, to determine $\tau$ it suffices to evaluate $\tau(c(e_I)) := \tau(c(e_{i_1}) \cdots c(e_{i_k}))$ for some multi-index $I = (i_1, \ldots, i_k)$, where $e_1, \ldots, e_8$ is an orthonormal basis of $\real^8$.
For chirality reasons, $\tau(c(e_I)) = 0$ when $|I|$ is odd.
When $|I|$ is even, there is a Wick formula.
\begin{lemma}
    For a matrix $M$,
    \[
        \tau(c(M)) = -\frac{1}{2} \mathrm{Tr}\,M - \frac{i}{2} \langle \omega, M \rangle.
    \]
    If $|I| = 2q, q > 1$, then
    \begin{equation}
        \label{eq:pfaff}
        \tau(c(e_I)) = \sum_\sigma \mathrm{sgn}(\sigma) \prod_{j=1}^q \tau(c(e_{\sigma(2j-1)}) c(e_{\sigma(2j)}));
    \end{equation}
    the sum runs over all permutations $\sigma$ with the property that $\sigma(2j-1) < \sigma(2j)$ for each $j = 1, \ldots, q$, and $\sigma(1) < \sigma(3) < \cdots < \sigma(2q-1)$.
\end{lemma}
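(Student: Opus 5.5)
The plan is to identify the rank-one projection $P_{-4i}$ with the projection onto a Fock vacuum, and then read both formulas off the canonical anticommutation relations. Choose an orthonormal basis adapted to $J$, say $e_{2a} = J e_{2a-1}$ for $a = 1, \ldots, 4$, so that $\omega$ is a sum of four commuting $2$-planes. Define
\[
\psi_a = \tfrac12\bigl(c(e_{2a-1}) + i\,c(e_{2a})\bigr), \qquad \psi_a^\dagger = -\tfrac12\bigl(c(e_{2a-1}) - i\,c(e_{2a})\bigr).
\]
From $c(e_j)c(e_k) + c(e_k)c(e_j) = -2\delta_{jk}$ these satisfy $\{\psi_a, \psi_b^\dagger\} = \delta_{ab}$ and $\{\psi_a, \psi_b\} = 0$. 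With the paper's normalisation $c(M) = \tfrac12 M_{ab}c(e_a)c(e_b)$, the operator $c(\omega)$ is, up to sign conventions, a sum over $a$ of terms $c(e_{2a-1})c(e_{2a})$. Each such term is an affine function of the number operator $\psi_a^\dagger\psi_a$ with eigenvalues $\pm i$ (rescaled by the paper's factor). So $c(\omega)$ is an affine function of the total number operator. Its extreme eigenvalue $-4i$ then occurs on a unique line $\comp v$, with $v$ either annihilated by all $\psi_a$ or by all $\psi_a^\dagger$. I will fix the convention so that $\psi_a v = 0$ for all $a$, and normalise $|v| = 1$. This matches the stated multiplicities $1, 6, 1$, and it gives $\tau(T) = \langle T v, v\rangle$.

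For the first formula, compute the two-point function $\tau(c(e_j)c(e_k))$. When $j = k$ it is $-1$. When $j \neq k$ lie in different $J$-planes, expanding in $\psi, \psi^\dagger$ kills the term, since $v$ is annihilated by each $\psi_a$ and each mixed term has zero vacuum expectation. For the pair $(2a-1, 2a)$ it is $\pm i$, and the sign is pinned down by requiring the eigenvalue $-4i$. This gives $\tau(c(e_j)c(e_k)) = -\delta_{jk} - i\,\omega_{jk}$ up to the sign convention for $\omega_{jk}$. Then
\[
\tau(c(M)) = \tfrac12 M_{jk}\,\tau(c(e_j)c(e_k)) = -\tfrac12 \Tr M - \tfrac{i}{2}\langle \omega, M\rangle.
\]
As a consistency check, $M = \omega$ gives $-\tfrac{i}{2}\cdot 8 = -4i$, which confirms the sign. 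Odd-length products vanish by chirality, as noted in the paper.

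For \eqref{eq:pfaff}, take $I$ with distinct indices. A repeated index can be removed using $c(e)^2 = -1$, and one checks that the Pfaffian formula is compatible with this. Each $c(e_{i})$ is a linear combination $\alpha\psi + \beta\psi^\dagger$, so $c(e_I)$ is a product of $2q$ elements of a space $W$ of operators in which any two elements anticommute to scalars. I then prove by induction on $q$ the standard fermionic Wick theorem for the vacuum state: for $w_1, \ldots, w_{2q} \in W$,
\[
\langle w_1 \cdots w_{2q} v, v\rangle = \sum_{m=2}^{2q} (-1)^{m} \{w_1^{-}, w_m\}\,\langle w_2 \cdots \wh{w_m} \cdots w_{2q}\, v, v\rangle,
\]
where $w_1^{-}$ denotes the annihilation part of $w_1$. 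To prove this, split $w_1 = w_1^- + w_1^+$. The creation part contributes nothing, because its adjoint is an annihilation operator that kills $v$. Then anticommute $w_1^-$ to the right until it hits $v$, collecting the scalar anticommutators with their signs. Since $v$ is the vacuum, $\{w_1^-, w_m\} = \langle w_1 w_m v, v\rangle$ whenever $1 < m$, so each coefficient is exactly the two-point function $\tau$. Unfolding the recursion gives the sum over pairings with $\sigma(2j-1) < \sigma(2j)$ and increasing left endpoints, with sign $\mathrm{sgn}(\sigma)$, which is \eqref{eq:pfaff}.

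The main obstacle is bookkeeping rather than any conceptual step. One must check that the sign in the recursion matches $\mathrm{sgn}(\sigma)$ for the canonical ordering of pairings. One must also keep the conventions consistent: the paper's factor $\tfrac12$ in $c(M)$, the sign of $\omega_{jk}$ versus $\langle J\cdot,\cdot\rangle$, and which extreme eigenline is $-4i$. I will fix the latter at the outset using the check $\tau(c(\omega)) = -4i$.
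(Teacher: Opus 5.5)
Your argument is correct. The Wick step is essentially the paper's. The paper splits $c(v) = c^-(v) + c^+(v)$ with $c^\pm(v) = \tfrac12\bigl(c(v) \pm i\,c(\omega v)\bigr)$ and notes that $c^-(v)$ kills the $-4i$ line because $-6i$ is not an eigenvalue of $c(\omega)$; it also uses, implicitly, that $P_{-4i}c^+(v) = 0$. It then anticommutes $c^+$ of the \emph{last} factor to the left. This gives a recursion along the last index with scalar contractions $\{c(e_a), c^+(e_b)\} = -\delta_{ab} - i\omega_{ab}$. Your $\psi_a, \psi_a^\dagger$ are exactly these components written in an adapted basis, and expanding along the first factor is the mirror image of the paper's expansion.

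The genuine difference is in the first formula. The paper argues by invariance:
\begin{itemize}
\item it decomposes $\so(8) = \ker \ad(\omega) \oplus \mathrm{im}\,\ad(\omega)$;
\item it kills $\tau$ on $c(\mathrm{im}\,\ad(\omega))$ using $[c(A), c(B)] = -2c([A,B])$ and the eigenvector property;
\item it kills $\tau$ on $c(\mathfrak{su}(4))$ because $c(\mathfrak{u}(4))$ preserves the $-4i$ line, so commutators annihilate $v$;
\item it normalises with $\tau(c(\omega)) = -4i$.
\end{itemize}
Only afterwards does it read off the two-point function from this formula. You reverse the order: you compute $\tau(c(e_j)c(e_k)) = -\delta_{jk} - i\omega_{jk}$ directly in the Fock model and then contract with $M$.

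The paper's route is coordinate-free and needs no model of $S$. However, it treats only skew $M$, leaving the $-\tfrac12 \Tr M$ term for non-skew $M$ implicit, and it tacitly uses that $\mathfrak{su}(4)$ is spanned by commutators. Your route produces directly the contraction that the Wick step consumes and handles arbitrary $M$ uniformly. It also explains why the $-4i$ line is one-dimensional (it is the Fock vacuum), instead of importing that from Braverman. The price is sign bookkeeping, which you correctly fix via $\tau(c(\omega)) = -4i$.

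Finally, your reduction to distinct indices is unnecessary. Your Wick theorem is stated for arbitrary $w_1, \ldots, w_{2q} \in W$, so it already covers repeated indices, which is the case actually used in Lemma~\ref{lem:matrixvan}.
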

\begin{proof}
    Let $v$ be a unit $-4i$ eigenvector for $c(\omega)$.
    Under the adjoint map $\ad(\omega) = [\omega, \cdot]$, there is a Lie algebra decomposition
    \[
        \so(8) = \ker \ad(\omega) \oplus \mathrm{im}\, \ad(\omega).
    \]
    Let $M$ be any skew-symmetric matrix, and compute
    \begin{align*}
        \tau(c([M, \omega])) = \langle c([M, \omega])v, v \rangle = -\frac{1}{2} \langle [c(M), c(\omega)]v, v \rangle = -\frac{1}{2} (-4i \tau(c(M)) + 4i \tau(c(M))) = 0. 
    \end{align*}

    Note that $\ker \ad(\omega) = \mathfrak{u}(4) = \langle \omega \rangle \oplus \mathfrak{su}(4)$.
    For $M \in \ker \ad(\omega)$, $c(M)v = -\frac{1}{4i} c(\omega) c(M) v$; that is, $c(M) v$ is again in the $-4i$ eigenspace of $c(\omega)$.
    It follows that for $M_1, M_2 \in \ker \ad(\omega)$, $[c(M_1), c(M_2)]v = 0$.
    Thus $\tau(c(\mathfrak{su}(4))) = 0$.
    Finally, $\tau(c(\omega)) = -4i$ whence
    \[
    \tau(c(M)) = -\frac{1}{2} \mathrm{Tr}\, M -\frac{i}{2} \langle \omega, M \rangle.
    \]

    The Wick formula is proved by induction.
    Let $I'_r$ denote the sub-index of $(i_1, \ldots, i_{2q})$ in which $i_r, i_{2q}$ are omitted.
    We show that
    \[
        \tau(c(e_I)) = \sum_{r=1}^{2q-1} (-1)^{r-1} \tau(c(e_{i_r}) c(e_{i_{2q}})) \tau(c(e_{I'_r})),
    \]
    from which the claimed Equation~\eqref{eq:pfaff} follows by a standard Pfaffian counting argument.

      For a vector $v \in \real^8$, define
    \[
        c^\pm(v) := \tfrac{1}{2}\big(c(v) \pm i\, c(\omega v)\big), 
    \]
    so $c(v) = c^-(v) + c^+(v)$.
    Note that $[c(\omega), c^\pm(v)] = \pm 2i c^\pm(v)$, and therefore
    \[
        c(\omega) c^\pm(v) = c^\pm(v) ( c(\omega) \pm 2i );
    \]
    that is, $c^\pm(v)$ maps the $i\mu$-eigenspace of $c(\omega)$ into the $i(\mu \pm 2)$-eigenspace.
    In particular, $c^-(v) P_{-4i} = 0$ because $-6i$ is not an eigenvalue of $c(\omega)$.

    We have
    \[
        \{ c(e_a), c^+(e_b) \} = -\delta_{ab} - i \omega_{ab} = \tau(c(e_a) c(e_b)),
    \]
    the last equality following from the above formula for $\tau(c(E_{ab}))$ where $E_{ab}$ is the skew-symmetric matrix with a $1$ in the $(a,b)$ position, a $-1$ in the $(b,a)$ position, and zeroes elsewhere.

    Compute
    \begin{align*}
        P_{-4i} c(e_I) P_{-4i} &= P_{-4i} c(e_{i_1}) \cdots c(e_{i_{2q-1}}) c^+(e_{i_{2q}}) P_{-4i} \\
                               &= P_{-4i} \left(\sum_{r=1}^{2q-1} (-1)^{2q-1-r} \{c(e_{i_r}), c^+(e_{i_{2q}}) \} c(e_{I'_r})\right) P_{-4i} \\
                               &= \sum_{r=1}^{2q-1} (-1)^{r-1} \tau(c(e_{i_r})c(e_{i_{2q}}))P_{-4i} c(e_{I'_r}) P_{-4i} \\
                               &= \sum_{r=1}^{2q-1} (-1)^{r-1} \tau(c(e_{i_r})c(e_{i_{2q}}))\tau(c(e_{I'_r})) P_{-4i}
    \end{align*}
    as desired.
\end{proof}

\begin{lemma}
    \label{lem:innerchain}
    Let $\omega^1, \ldots, \omega^\ell$ be skew-symmetric matrices.
    For a skew-symmetric matrix $\eta = \eta_{ab} e_a \otimes e_b^*$,
    \[
        \frac{1}{2} \eta_{ab} c(e_a) c(\omega^1) \cdots c(\omega^\ell) c(e_b) = \sum_{J} (-2)^{\ell-|J|} c(\omega^{j_1}) \cdots c(\omega^{j_k}) c((\omega^{j'_1} \cdots \omega^{j'_{\ell-k}})^t \eta),
    \]
    where the sum runs over all ordered multi-indices $J = (j_1 < \cdots < j_k), J \subset \{1, \ldots, \ell\}$, and $J' = (j'_1 < \cdots < j'_{\ell-k})$ is the complementary multi-index to $J$.
\end{lemma}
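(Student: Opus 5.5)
The plan is to prove the identity by induction on $\ell$, peeling off the leftmost factor $c(\omega^1)$ by commuting $c(e_a)$ to the right past it. I will prove a slightly stronger statement: the identity holds for an \emph{arbitrary} real matrix $\eta = \eta_{ab} e_a\otimes e_b^*$, not only skew-symmetric ones. This is needed because the matrices $(\omega^{j'_1}\cdots\omega^{j'_{\ell-k}})^t\eta$ produced along the way are no longer skew. The paper's definition $c(M)=\tfrac12 M_{ab}c(e_a)c(e_b)$ already makes sense for general $M$, so the strengthened statement is meaningful. The base case $\ell=0$ is then exactly this definition: $\tfrac12\eta_{ab}c(e_a)c(e_b)=c(\eta)$.

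The key step is a commutation formula between $c(\omega)$, for $\omega$ skew, and a single Clifford generator. Using $c(e_a)c(e_b)+c(e_b)c(e_a)=-2\delta_{ab}$, I compute
\[
[c(e_a)c(e_c),c(e_b)] = -2\delta_{cb}c(e_a)+2\delta_{ab}c(e_c).
\]
Contracting with $\tfrac12\omega_{ac}$ and using skewness of $\omega$ gives $[c(\omega),c(e_b)]=-2\omega_{ab}c(e_a)$. Hence
\[
c(e_a)c(\omega)=c(\omega)c(e_a)+2\omega_{ca}c(e_c).
\]
Substituting this for $\omega=\omega^1$ into the left-hand side splits it into two terms:
\[
c(\omega^1)\cdot\tfrac12\eta_{ab}c(e_a)c(\omega^2)\cdots c(\omega^\ell)c(e_b) \;+\; \omega^1_{ca}\eta_{ab}\,c(e_c)c(\omega^2)\cdots c(\omega^\ell)c(e_b).
\]
In the second term, $\omega^1_{ca}\eta_{ab}=(\omega^1\eta)_{cb}=-((\omega^1)^t\eta)_{cb}$, so it equals $-2\cdot\tfrac12((\omega^1)^t\eta)_{cb}\,c(e_c)c(\omega^2)\cdots c(e_b)$. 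This is $(-2)$ times the same type of expression with $\ell-1$ factors and matrix $(\omega^1)^t\eta$.

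Applying the induction hypothesis to both terms (with matrices $\eta$ and $(\omega^1)^t\eta$ respectively, and factors $\omega^2,\dots,\omega^\ell$) produces the sum over subsets $J\subset\{2,\dots,\ell\}$. The first term supplies the subsets $J$ containing $1$, with $c(\omega^1)$ placed leftmost, which is correct since $1=j_1$. The second term supplies the subsets with $1\in J'$, and carries one extra factor of $-2$. For these, the accumulated matrix is
\[
(\omega^{j'_2}\cdots)^t(\omega^1)^t\eta=(\omega^1\omega^{j'_2}\cdots)^t\eta,
\]
which is exactly the required ordered product with $j'_1=1$. The powers of $-2$ add up to $(-2)^{\ell-|J|}$.

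I expect the main obstacle to be purely bookkeeping: getting the sign of the commutator right, and checking that transposes compose in the right order so that the complementary indices appear increasing inside $(\omega^{j'_1}\cdots\omega^{j'_{\ell-k}})^t$. Both points are settled by the explicit computation above.
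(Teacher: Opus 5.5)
Your proposal is correct and is essentially the paper's proof: commute $c(e_a)$ past $c(\omega^1)$ using $[c(e_a),c(\omega^1)] = 2\omega^1_{sa}c(e_s)$, split into the $c(\omega^1)$-term and the $(\omega^1\eta)$-term, and induct. Your strengthening to arbitrary non-skew $\eta$ is a worthwhile clarification, since the paper's ``the proof follows by induction'' tacitly applies the hypothesis to $(\omega^1)^t\eta$, which is generally not skew.
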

\begin{proof}
    Compute
    \begin{align*}
        \frac{1}{2} \eta_{ab} c(e_a) c(\omega^1) \cdots c(\omega^\ell) c(e_b)  &= \frac{1}{2} \eta_{ab} ([c(e_a), c(\omega^1)] + c(\omega^1) c(e_a)) c(\omega^2) \cdots c(\omega^\ell) c(e_b) \\
                                                                               &= \frac{1}{2} \eta_{ab} (2\omega^1_{sa} c(e_s) + c(\omega^1) c(e_a)) c(\omega^2) \cdots c(\omega^\ell) c(e_b) \\
                                                                            &=  \frac{1}{2}c(\omega^1) \eta_{ab} c(e_a) c(\omega^2) \cdots c(\omega^\ell) c(e_b) \\
                                                                            &\qquad\qquad + 2 \frac{1}{2} (\omega^1 \eta)_{ab} c(e_a) c(\omega^2) \cdots c(\omega^\ell) c(e_b);
    \end{align*}
    the proof follows by induction.
\end{proof}

\begin{definition}
    \label{def:pl}
    Let $\omega$ be the 2-form of a complex structure on $\real^8$ and let $V$ be a subspace of skew-symmetric matrices.
    For $\ell \ge 0$, define
    \[
        \mathcal{P}_\ell(V) := \vspan \left\{ \omega^{\epsilon_0} v_1 \omega^{\epsilon_1} v_2 \cdots v_j \omega^{\epsilon_j} \mid 0 \le j \le \ell, v_i \in V, \epsilon_i \in \{0,1\} \right\} \subset M_{8\times 8}(\real),
    \]
    the span of all matrix products of at most $\ell$ elements of $V$ interleaved with powers of $\omega$. 
\end{definition}
Note that $I, \omega \in \mathcal{P}_0(V)$.

\begin{lemma}
    \label{lem:matrixvan}
    Suppose that a matrix $\eta$ satisfies $\langle \eta, \mathcal{P}_\ell(V) \rangle = 0$.
    Let $r \le s \le t$ be integers, and let $M_j \in V \cup \{\omega\}$ for $1 \le j \le t$.
    If $M_j \in \langle \omega\rangle$ is true for at least $t - \ell$ indices $j$, then
    \[
        \tau (\eta^{ab} c(M_1) \cdots c(M_r) c_a c(M_{r+1}) \cdots c(M_s) c_b c(M_{s+1}) \cdots c(M_t)) = 0.
    \]
\end{lemma}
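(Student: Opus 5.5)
The plan is to use Lemma~\ref{lem:innerchain} to remove the two free Clifford vectors $c_a, c_b$ contracted against $\eta$. Doing so turns the expression into a sum of products of Clifford multiplications by matrices built from the $M_j$ and $\eta$. Then I will use the Wick formula together with the description of $\tau$ on $\ker\ad(\omega)$ to show that every term pairs $\eta$ against an element of $\mathcal{P}_\ell(V)$.

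First I move $c_a$ to the right. Since $\tau(XY)$ is not cyclic in general, I avoid conjugating and work inside $P_{-4i}\,\cdot\,P_{-4i}$ directly. The middle block is $\eta^{ab} c_a c(M_{r+1})\cdots c(M_s) c_b$, and Lemma~\ref{lem:innerchain} rewrites it as
\[
2\sum_J (-2)^{(s-r)-|J|}\, c(M_{j_1})\cdots c(M_{j_k})\, c\big((M_{j'_1}\cdots M_{j'_{s-r-k}})^t\eta\big).
\]
Each term has the form
\[
\tau\big(c(M_1)\cdots c(M_r)\,c(M_{j_1})\cdots c(M_{j_k})\,c(N)\,c(M_{s+1})\cdots c(M_t)\big),
\]
where $N = Q^t\eta$ and $Q$ is a product of some of the $M$'s. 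There is a caveat: $N$ need not be skew. So I extend $c$ to all matrices via $c(M)=\tfrac12 M_{ab}c(e_a)c(e_b)$, which produces a scalar $-\tfrac12\Tr$ term. That is harmless, because $\Tr(Q^t\eta)=\langle Q,\eta\rangle$ and $Q\in\mathcal{P}_\ell(V)$ when few of its factors lie in $V$.

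Next I evaluate $\tau$ of a product of $c$'s of matrices containing exactly one factor $c(N)$ that depends linearly on $\eta$. The key structural step is to show that $\tau(c(A_1)\cdots c(A_p)\,c(N)\,c(B_1)\cdots c(B_q))$ is a linear combination of $\langle R, N\rangle$, where $R$ is a product of the $A$'s, $B$'s (or their transposes) interleaved with powers of $\omega$, with scalar coefficients coming from $\tau$ of the remaining factors. To prove this I would use the $c^\pm$ splitting from the proof of the Wick formula. Factors $c(\omega)$ act on $P_{-4i}$-eigenspaces by scalars. For the factors from $V$, I commute $c^+$ parts rightwards onto $P_{-4i}$ (where $c^-$ kills) via $[c(A),c^\pm(v)] = $ Clifford multiplication by $Av$. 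Each commutator moves one matrix factor onto the vector slot of $N$, while $\omega$ enters through the projections $c^\pm(v)=\tfrac12(c(v)\pm i c(\omega v))$. Writing $N=\tfrac12 N_{ab}c_ac_b$ and tracking the two vectors, I expect each resulting term to be of the form $\langle \omega^{\epsilon_0}A_{i_1}\omega^{\epsilon_1}\cdots,\,N\rangle$ times scalars $\tau(\cdots)$ of the leftover factors. Substituting $N=Q^t\eta$ then gives $\langle Q R,\eta\rangle$.

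Finally comes the counting. At most $\ell$ of the $M_j$ lie in $V\setminus\langle\omega\rangle$. Every matrix $QR$ produced above is a product of a subset of the $M_j$ (possibly transposed, but $V$ is skew, so a transpose only introduces a sign) interleaved with powers of $\omega$. Since $\omega^2=-I$, powers of $\omega$ reduce to $\omega^{\epsilon}$, $\epsilon\in\{0,1\}$. Hence $QR\in\mathcal{P}_\ell(V)$, every term vanishes by the hypothesis $\langle\eta,\mathcal{P}_\ell(V)\rangle=0$, and the lemma follows.

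I expect the main obstacle to be the structural step: showing that $\tau$ with a single $\eta$-dependent factor is always an inner product of $\eta$ against such an interleaved product, rather than involving something outside $\mathcal{P}_\ell(V)$. If the $c^\pm$ bookkeeping becomes unwieldy, I would instead expand everything into $c(e_I)$ monomials and apply the Wick formula~\eqref{eq:pfaff}. Each Wick contraction $\tau(c_ac_b)=-\delta_{ab}-i\omega_{ab}$ is the $(a,b)$ entry of $-(I+i\omega)$. Chaining the contractions that link $\eta$'s two indices through the intervening matrices would produce exactly $\Tr\big(\eta^t\,M_{i_1}(I+i\omega)M_{i_2}\cdots\big)$, an element of $\mathcal{P}_\ell(V)$ paired with $\eta$. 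All other contraction cycles avoid $\eta$ and contribute scalars.
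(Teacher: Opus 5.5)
Your proposal is correct and is essentially the paper's proof: Lemma~\ref{lem:innerchain} strips off $c_a,c_b$, and the Wick formula~\eqref{eq:pfaff} then shows that in every pairing the contraction chain joining the two indices of $\eta$ pairs $\eta$ with a product of a subset of the $M_j$ (so at most $\ell$ elements of $V$) interleaved with factors $I\pm i\omega$. That product lies in $\mathcal{P}_\ell(V)\otimes\comp$, while all remaining contractions contribute only scalars. Your pairing $\langle R,Q^t\eta\rangle=\langle QR,\eta\rangle$ is just a repackaging of the paper's reduction to $J'=\emptyset$ via $Q^t\eta\perp\mathcal{P}_{\ell-m}(V)$, where $m$ is the number of $V$-factors of $Q$. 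The Wick argument you list as a fallback is the one to write up, because your $c^\pm$ sketch is loose: in fact $[c(A),c^\pm(v)]=-2c^\pm(Av)\mp i\,c([A,\omega]v)$, and the annihilation relations are $c^-P_{-4i}=0=P_{-4i}c^+$, so $c^+$ must be moved leftwards, not rightwards.
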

\begin{proof}
    By Lemma~\ref{lem:innerchain} it suffices to show that for an ordered multi-index $J = (j_1, \ldots, j_k) \subset (r+1, \ldots, s)$ with complementary multi-index $J' = (j'_1, \ldots, j'_{s-r-k})$,
    \[
        \tau (c(M_1) \cdots c(M_r) c(M_{j_1})\cdots c(M_{j_k}) c((M_{j'_1} \cdots M_{j'_{s-r-k}})^t \eta ) c(M_{s+1}) \cdots c(M_t) ) = 0.
    \]
    We show the case where $J'$ is empty; the case for arbitrary $J'$ follows from the fact that if at least $n$ of $M_{j'_1}, \ldots, M_{j'_{s-r-k}}$ are contained in $\langle I, \omega\rangle$, then
    \[
        (M_{j'_1} \cdots M_{j'_{s-r-k}})^t \eta \perp \mathcal{P}_{\ell-(s-r-k-n)}(V).
    \]
    Thus we are to evaluate
    \[
        (M_1)_{i_1 i_2} \cdots (M_t)_{i_{2t-1} i_{2t}} \eta_{ab} \tau(c(e_I))
    \]
    for the multi-index $I = (i_1, i_2, \ldots, i_{2r-1}, i_{2r}, a, b, i_{2r+1}, \ldots, i_{2t-1}, i_{2t})$ (allowing repeated entries).

    Using Equation~\eqref{eq:pfaff}, let $\sigma$ be any admissible permutation of these indices.
    The contribution of the $\sigma$ term is
    \[
        \mathrm{sgn}(\sigma) (M_1)_{i_1 i_2} \cdots (M_t)_{i_{2t-1} i_{2t}} \eta_{ab} \prod_{j=1}^{t+1} (-\delta_{\sigma(2j-1) \sigma(2j)} - i \omega_{\sigma(2j-1) \sigma(2j)}). \tag{$\star$}
    \]
    We show that $(\star)$ vanishes.
    For some $j$, $\sigma(2j-1) = a$.
    If $\sigma(2j) = b$, then $\eta_{ab} (-\delta_{ab} - i \omega_{ab}) = 0$ since $\eta \perp \mathcal{P}_0(V)$.
    Otherwise, there exist indices $k_1, \ldots, k_m$ for which $\sigma(2j) = i_{2k_1}$, $\sigma(2k_i - 1) = i_{2k_{i+1} - 1}$, and $\sigma(2k_m) = b$.
    In this case, we compute up to signs that
    \begin{align*}
        (\star) &= 
        \pm \left( \prod_{j=1, j\neq k_i}^t (M_j)_{i_{2j-1} i_{2j}} (-\delta_{\sigma(2j-1) \sigma(2j)} - i \omega_{\sigma(2j-1) \sigma(2j)})\right) \langle M_{k_m} (I + i\omega) \cdots M_{k_1} (I + i\omega)\eta, I + i\omega \rangle \\
                &= 0,
\end{align*}
because $(I + i\omega) M_{k_m} (I + i\omega) \cdots M_{k_1} (I + i\omega) \in \mathcal{P}_\ell(V)$ by hypothesis and the fact that $\omega^2 = -I$.
\end{proof}
Lemma~\ref{lem:matrixvan} is used in the proof of Lemma~\ref{lem:AHfromVAN}.

\section{Vanishing theorems}
\label{sec:vanishing}

\begin{lemma}
    \label{lem:plusminuscoho}
    With the $\Spin(7)$ structure and orientation given by the Cayley form of Equation~\eqref{eq:cayley}, $H^2_{21}(\real^8/\inte^8, \inte)$ contains cohomology classes $\omega_+, \omega_-$ such that $\omega_+^4 > 0$ and $\omega_-^4 < 0$.
\end{lemma}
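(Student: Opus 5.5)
The plan is to look for $\omega_\pm$ among constant integral 2-forms of the block-diagonal shape
\[
    \alpha = a_1\,\di x^{12} + a_2\,\di x^{34} + a_3\,\di x^{56} + a_4\,\di x^{78}, \qquad a_i \in \inte .
\]
Such forms define classes in $H^2(\real^8/\inte^8,\inte)$. Since $\alpha^4 = 24\,a_1a_2a_3a_4\,\di x^{12345678}$, the sign of $\omega^4$ is the sign of the product $a_1a_2a_3a_4$. So it suffices to determine which such $\alpha$ lie in $\Lambda^2_{21}$, that is, satisfy $\star(\alpha\wedge\Phi) = \alpha$, and then to exhibit one integral solution with positive product and one with negative product.

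The first step is to compute the operator $T := \star(\cdot\wedge\Phi)$ on $W := \vspan\{\di x^{12},\di x^{34},\di x^{56},\di x^{78}\}$, using Equation~\eqref{eq:cayley}. For $\di x^{12}$, the terms of $\Phi$ avoiding the indices $1,2$ are $-\di x^{3456}$, $\di x^{3478}$ and $\di x^{5678}$. Taking Hodge stars gives
\[
    T(\di x^{12}) = \di x^{34} + \di x^{56} - \di x^{78}.
\]
Similarly, the terms of $\Phi$ avoiding $3,4$ are $\di x^{1256}$, $-\di x^{1278}$ and $\di x^{5678}$, which give
\[
    T(\di x^{34}) = \di x^{12} - \di x^{56} + \di x^{78}.
\]
The same bookkeeping gives $T(\di x^{56})$ and $T(\di x^{78})$, so $T$ preserves $W$ and is a symmetric $4\times 4$ matrix with zero diagonal. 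The main point to check carefully is the signs of these Hodge stars; this is the only place an error could occur.

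From the rows already computed, $n = (1,-1,-1,1)$ (in the basis above) satisfies $Tn = -3n$, and I expect the remaining two rows to confirm this. Since $T$ is symmetric with eigenvalues $1$ (multiplicity $21$) and $-3$ (multiplicity $7$), the intersection $W\cap\Lambda^2_7$ is spanned by $n$. The restriction of $T$ to $W$ is traceless, so its eigenvalues there are $-3,1,1,1$. Hence $W\cap\Lambda^2_{21}$ is exactly the orthogonal complement of $n$:
\[
    a_1 - a_2 - a_3 + a_4 = 0 .
\]

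Finally, I choose integer solutions of this linear condition with products of either sign. The choice $a = (1,1,1,1)$ gives the standard Kähler form $\omega_+ = \di x^{12}+\di x^{34}+\di x^{56}+\di x^{78}$, for which $\omega_+^4 = 24\,\di x^{1\cdots8} > 0$. The choice $a = (1,3,-1,1)$ satisfies $1-3+1+1 = 0$ and has product $-3$, giving $\omega_- = \di x^{12} + 3\,\di x^{34} - \di x^{56} + \di x^{78}$ with $\omega_-^4 = -72\,\di x^{1\cdots8} < 0$. Both forms have integer coefficients, so they represent classes in $H^2_{21}(\real^8/\inte^8,\inte)$, which proves the lemma.
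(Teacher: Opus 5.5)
Your proof is correct, and it is essentially the paper's argument: exhibit explicit integral classes in $\Lambda^2_{21}$ with $\omega_+^4 = 24\vol$ and $\omega_-^4 = -72\vol$. The two rows you deferred are $T(\di x^{56}) = \di x^{12} - \di x^{34} + \di x^{78}$ and $T(\di x^{78}) = -\di x^{12} + \di x^{34} + \di x^{56}$, so $Tn=-3n$ and $T\omega_\pm=\omega_\pm$ do hold. The only difference from the paper is that it writes its examples as $\gamma^{12}$ and $\gamma^{12}+\gamma^{34}+\gamma^{56}$, where membership in $\Lambda^2_{21}$ is automatic from the $\gamma$-matrix description, whereas you verify membership directly from Equation~\eqref{eq:cayley}, which also makes integrality explicit.
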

\begin{proof}
    For example, with $\omega_+ = \gamma^{12}$, $\omega_- = \gamma^{12} + \gamma^{34} + \gamma^{56}$ and $\vol = \di x^{12345678}$ the volume element,
    \[
        \omega_+^4 = 24 \vol,\qquad \omega_-^4 = -72\vol.\qedhere
    \]
\end{proof}

Lemma~\ref{lem:plusminuscoho} stands in contrast to the 4-dimensional ASD case, where $\omega^2 < 0$ for every ASD form.

Let $L \to X$ be a line bundle on a compact spin even-dimensional manifold.
Say that $L$ is oriented positively (resp.\ negatively) if its curvature $F_L$ is everywhere nondegenerate and the symplectic form $i F_L$ induces the same (resp.\ opposite) orientation to the given one on the manifold.
There is an asymptotic vanishing theorem in the style of the Kodaira vanishing theorem, due to Braverman~{\cite[Theorem~3.2]{bravermanVanishingTheoremsKernel1998}}.
\begin{theorem}[Braverman]
    \label{thm:vanishing}
    Let $L \to X$ be as above, and suppose that $L$ is oriented positively (resp.\ negatively).
    Let $E \to X$ be a Hermitian bundle with compatible connection.
    For $k \gg 0$, the Dirac operator on the bundle $E(k) = E \otimes L^{\otimes k}$ satisfies $\ker D^-_k = 0$ (resp.\ $\ker D^+_k = 0$).
\end{theorem}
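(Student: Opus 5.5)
We would prove Braverman's theorem by a Bochner--Lichnerowicz argument that uses the asymptotic spectrum of $c(F_L)$, following the Kodaira-type approach. On $S \otimes E \otimes L^{\otimes k}$ the twisted Dirac operator satisfies the Lichnerowicz formula
\[
    D_k^2 = \nabla_k^* \nabla_k + \tfrac{\mathrm{scal}}{4} + c(F_E) + k\, c(F_L).
\]
Here $\nabla_k$ is the tensor product connection. The terms $\mathrm{scal}/4$ and $c(F_E)$ are bounded endomorphisms, independent of $k$. Write $F_L = -i\omega_k$ pointwise, where $\omega_k := iF_L$ is a nondegenerate real 2-form. The key algebraic input is that at each point we may choose a compatible metric-adapted frame in which $\omega = \sum_j \lambda_j e^{2j-1}\wedge e^{2j}$ with all $\lambda_j > 0$. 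Then $c(iF_L) = c(\omega)$ has eigenvalues $i\sum_j \pm\lambda_j$, which generalizes the eigenvalue computation for a complex structure quoted in Section~\ref{sec:algebra}.

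The sign is tied to chirality. The spinor on which all signs coincide, with eigenvalue $\mp i\sum\lambda_j$, lies in $S^+$ exactly when the orientation induced by $\omega$ agrees with that of $X$. In the positive case, every eigenvalue of the self-adjoint operator $k\,c(F_L) = -ik\,c(\omega)$ on $S^-$ has the form $k\sum_j \epsilon_j\lambda_j$ with an odd number of sign flips, so it is neither $\pm k\sum\lambda_j$. These eigenvalues can still be negative, however, so pointwise positivity alone fails, as in Kodaira vanishing. We therefore use the standard refinement: the Bochner term $\nabla_k^*\nabla_k$ also grows linearly in $k$.

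For this we use the Weitzenböck estimate for the Bochner Laplacian on $L^k$-twisted bundles. For any section $s$,
\[
    \|\nabla_k s\|^2 \ge k\int \Big(\sum_j \lambda_j\Big)|s|^2 - C\|s\|^2 .
\]
This follows by writing $\nabla_k^*\nabla_k = \sum_j (\bar\partial\text{-type})^*(\cdot) + k\sum_j\lambda_j + O(1)$, that is, by splitting $\nabla$ into $\omega$-holomorphic and antiholomorphic parts with respect to an almost complex structure $J$ adapted to $\omega$ and commuting covariant derivatives. Combining this with the Lichnerowicz formula gives, on $S^-$,
\[
    \|D_k s\|^2 \ge k\int \min_{\epsilon}\Big(\sum_j\lambda_j + \sum_j\epsilon_j\lambda_j\Big)|s|^2 - C\|s\|^2 ,
\]
where the minimum runs over sign patterns with an odd number of minus signs. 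Each such pattern keeps at least one $+$, so the minimum is $\ge 2\min_j\lambda_j > 0$, and compactness gives a uniform positive bound. Hence $\|D_k^- s\|^2 \ge (ck - C)\|s\|^2$, and $\ker D_k^- = 0$ for $k\gg0$.

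The negative case is identical with the chiralities exchanged. Reversing the orientation of $X$ swaps $S^\pm$, or equivalently the all-minus pattern now lies in $S^-$, so the same bound holds on $S^+$.

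The main obstacle is making the Bochner estimate precise when $\omega$ is not compatible with the metric. The first thing we would try is Braverman's device of deforming the Clifford action, or equivalently working with a modified Dirac operator $D_k + $ (bounded correction) built from the metric $g_J$ determined by $\omega$ and $g$. The index and kernel are then compared via a homotopy whose operators stay invertible for large $k$.
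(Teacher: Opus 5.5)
The paper gives no proof of this statement. It quotes Braverman and later uses only the quantitative consequence, the gap $D^+D^-\ge 2k-\cspec$ from Braverman's \S4.5.

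Your core argument is the standard Lichnerowicz--Bochner proof, and it is correct. Pointwise, $k\,c(F_L)$ has eigenvalues $k\sum_j\epsilon_j\lambda_j$. In the positive case, chirality excludes the all-minus pattern from $S^-$. The weighted Landau-level bound $\|\nabla_k s\|^2\ge k\int\tau|s|^2-C\|s\|^2$, with $\tau=\sum_j\lambda_j$, then combines pointwise with the curvature term to give $\|D_k^-s\|^2\ge(2k\min\lambda_j-C)\|s\|^2$. For $\lambda_j\equiv1$ this is exactly the paper's $2k-\cspec$. It matters that your Bochner bound carries the pointwise weight $\tau(x)$ rather than $\min_x\tau$, and yours does. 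A minor point: ``an odd number of minus signs on $S^-$'' is the $n=4$ case. In real dimension $2n$ the chirality constraint is that the number of minus signs is $\not\equiv n \pmod 2$, which still rules out the all-minus pattern.

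Your last paragraph, however, names an obstacle that is not there and proposes a fix that would not work. Write $\omega=g(A\cdot,\cdot)$. The almost complex structure your normal-form frame already defines ($Je_{2j-1}=e_{2j}$) is $J=A(-A^2)^{-1/2}$. It is smooth, $g$-orthogonal and commutes with $A$, so $g$ is $J$-Hermitian and $\omega$ is a positive $J$-$(1,1)$-form.

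The $\nabla^{1,0}/\nabla^{0,1}$ computation uses only that $g$ is $J$-Hermitian; it never needs $g=\omega(\cdot,J\cdot)$. Also, $\tau$ is the trace of the Hermitian form $F_L(\cdot,\bar\cdot)$ on $T^{1,0}$, so no smooth diagonalisation is required.

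The one real point of care is the first-order remainder $T$ produced by derivatives of the local unitary frame. A priori $|\langle Ts,s\rangle|\le C\|\nabla s\|\,\|s\|$, which can be of size $\sqrt{k}\,\|s\|^2$, so it is not an $O(1)$ term. Integrating its $(1,0)$-part by parts bounds it by $C\|s\|\,\|\nabla^{0,1}s\|+C\|s\|^2$, which is absorbed by $2\|\nabla^{0,1}s\|^2$. Alternatively, a bound $(1-\varepsilon)k\tau-C_\varepsilon$ still suffices, because the gap on $S^-$ is $2k\min_j\lambda_j$.

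Drop the homotopy/deformation remedy, for three reasons:
\begin{enumerate}
\item Changing the metric changes $D_k$ by a first-order operator of size $\sim\sqrt{k}$, not by a bounded correction.
\item $D_k^-$ is never invertible, since its index is of order $k^{n}$.
\item Injectivity is not a homotopy invariant, so proving it along the path would require exactly the estimate you are trying to establish.
\end{enumerate}
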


As a consequence, there can be no generic vanishing theorem for $\Spin(7)$ instantons.

\NoVanishing*
\begin{proof}
    Let $\omega_+, \omega_-$ be the forms of Lemma~\ref{lem:plusminuscoho}.
    Consider $\real^8/\inte^8$ as a complex manifold in two ways with complex structure $\gamma^{12}$ (resp.\ $\gamma^7$), with respect to which $\omega_+$ (resp.\ $\omega_-$) is $(1,1)$.
    Using systems of multipliers associated to the forms $\omega_+, \omega_-$ (see, for example,~\cite[Theorem~2.6]{beauvilleThetaFunctionsOld}), construct instanton line bundles $L_+, L_-$ with curvatures $-i \omega_+, -i \omega_-$.

    As was shown in Lemma~\ref{lem:plusminuscoho}, the symplectic forms $ \omega_+, \omega_-$ determine different orientations on $\real^8 / \inte^8$.
    Let $E$ be any $\Spin(7)$ instanton and define $\Spin(7)$ instantons $E_1 = E \otimes L_+^{\otimes k}, E_2 = E \otimes L_-^{\otimes k}$.
    For $k \gg 0$, Theorem~\ref{thm:vanishing} gives $\ker D_{A_1} \subset \Gamma(S^+ \otimes E_1)$ and $\ker D_{A_2} \subset \Gamma(S^- \otimes E_2)$. 

    The index of the Dirac operator in each case is
    \[
        \mathrm{ind}\ D = k^4 \frac{\rk(E)}{(2\pi)^4 4!} \int_{T^8} \omega_{\pm}^4 + O(k^3)
    \]
    as $k \to \infty$, so for $k$ large enough $\mathrm{ind}\ D \neq 0$; therefore, these examples are not trivial.
\end{proof}

It is worth further contrasting this situation with that for $\Sp(2)$ and $\SU(4)$ instantons.
Recall that given a hyperk\"ahler structure with complex structures $I, J, K$, an $\Sp(2)$ instanton is an instanton in the generalised sense for the 4-form $\omega_I^2 + \omega_J^2 + \omega_K^2$ where $\omega_I, \omega_J, \omega_K$ are the K\"ahler forms corresponding to the complex structures.
An $\SU(4)$ instanton is one that is primitive Hermitian--Yang--Mills with respect to the complex structure; that is, $F \in \Omega^{1,1}_0(\ad E)$, the traceless $(1,1)$ forms (with respect to the K\"ahler form).
It follows (see~\cite{madnickSpnInstantonsFourierMukaiTransform2024}, for example) that every $\Sp(2)$ instanton is an $\SU(4)$ instanton and every $\SU(4)$ instanton is a $\Spin(7)$ instanton.
Moreover, there is a partial converse to each of these implications due respectively to Verbitsky~\cite{verbitskyHyperholomorphicBundlesHyperKahler1996} and Lewis~\cite{lewisSpin7Instantons}:
\begin{theorem}[Verbitsky, Lewis]
    \label{thm:verlew}
    Let $E$ be a Hermitian bundle on a compact 8-manifold $X$.
    \begin{itemize}
         \item Suppose that $X$ has $\Sp(2)$ structure, that $(E, A)$ is an $\SU(4)$ instanton, and that there is a connection $A_0$ on $E$ so that $(E, A_0)$ is an $\Sp(2)$ instanton.
             Then $(E, A)$ is an $\Sp(2)$ instanton.
        \item Suppose that $X$ has $\SU(4)$ structure, that $(E, A)$ is a $\Spin(7)$ instanton, and that there is a connection $A_0$ on $E$ so that $(E, A_0)$ is an $\SU(4)$ instanton.
             Then $(E, A)$ is an $\SU(4)$ instanton.
    \end{itemize}
\end{theorem}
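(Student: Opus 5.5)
Both parts are instances of one principle: a Yang--Mills-type energy that is topological, together with a second Chern--Weil identity that is also topological, pins down a curvature component. I would prove both parts by the same energy comparison, starting with the second (Lewis) part.

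For the second part, fix the $\SU(4)$ structure $(\omega,\Omega)$ with $\Phi = \tfrac12\omega^2 + \mathrm{Re}\,\Omega$. First I would do a pointwise algebraic step: decompose $\Lambda^2 = \langle\omega\rangle \oplus \Lambda^{1,1}_0 \oplus [\![\Lambda^{2,0}]\!]$ and intersect with the $\Spin(7)$ splitting. This should give $\Lambda^2_{21} = \Lambda^{1,1}_0 \oplus W_6$ and $\Lambda^2_7 = \langle \omega\rangle \oplus W'_6$, where $[\![\Lambda^{2,0}]\!] = W_6\oplus W'_6$ is split by the action of $\mathrm{Re}\,\Omega$. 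Consequently a $\Spin(7)$ instanton automatically has $\Lambda F = 0$, and it is an $\SU(4)$ instanton exactly when its component in $W_6$, equivalently $F^{2,0}$, vanishes. (If $E$ has nontrivial determinant, I would first split off the trace part and treat it separately; there $\Lambda F$ is a constant fixed by degree.)

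Next come the two Chern--Weil identities, valid for any connection $B$ on $E$. Since $\Phi$ and $\omega^2$ are closed, the integrals $\int \Tr(F_B\wedge F_B)\wedge\Phi$ and $\int\Tr(F_B\wedge F_B)\wedge\omega^2$ depend only on the characteristic classes of $E$. Pointwise, $-\Tr(F\wedge F)\wedge\Phi$ is a fixed combination of the norms of the $\Lambda^2_{21}$ and $\Lambda^2_7$ components, with the eigenvalues $1$ and $-3$. Likewise $-\Tr(F\wedge F)\wedge\omega^2$ is a fixed combination of $|F^{1,1}_0|^2$, $|\Lambda F|^2$ and $|F^{2,0}|^2$, with $F^{1,1}_0$ entering with a sign opposite to the other two. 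I would verify these constants from the $\gamma$-matrix description of Section~\ref{sec:algebra}.

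Now apply both identities to $A$ and to $A_0$. For $A$, which is a $\Spin(7)$ instanton, $\int|F_A|^2 = \int|F^{1,1}_{0,A}|^2 + \int|F^{2,0}_A|^2$ equals the $\Phi$-topological number. For $A_0$, which is an $\SU(4)$ instanton and hence also a $\Spin(7)$ instanton, the same number equals $\int|F^{1,1}_{0,A_0}|^2$. The $\omega^2$ identity says that $\int|F^{1,1}_0|^2 - c\int|F^{2,0}|^2$ takes the same value for $A$ and $A_0$, where $c>0$ and we have used $\Lambda F = 0$ for both. Subtracting the two relations gives $(1+c)\int|F_A^{2,0}|^2 = 0$, so $F_A^{2,0}=0$ and $A$ is an $\SU(4)$ instanton.

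The first (Verbitsky) part is handled identically, applied to each complex structure. An $\Sp(2)$ instanton has curvature of type $(1,1)$ with respect to $I$, $J$ and $K$ and is primitive, so it is an $\SU(4)$ instanton for each of $I,J,K$ with the appropriate holomorphic volume forms. $A$ is $\SU(4)$ for $I$, so $\int|F_A|^2$ and the primitive part are controlled through the closed form $\omega_I^2$. Running the $\omega_J^2$ identity for $A$ and $A_0$, and comparing energies with the $\omega_I^2$ identity, forces $\int|F_A^{2,0}|^2_J = 0$; the same argument with $K$ gives $\int|F^{2,0}_A|^2_K=0$. Hence $F_A$ is of type $(1,1)$ for all three complex structures. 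The main obstacle is the first, algebraic step: fixing the exact constants in the pointwise identities and checking that $F^{2,0}$ enters with a strictly signed coefficient, so that the subtraction isolates a positive multiple of $\int|F^{2,0}|^2$. I would verify this first in a $\gamma$-basis adapted to $\omega$ (respectively to $\omega_I,\omega_J,\omega_K$).
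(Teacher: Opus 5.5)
Your proposal is correct and is essentially the argument the paper sketches (citing Lewis): compare the Chern--Weil charges of two closed 4-forms for $A$ and $A_0$, using that $\Lambda^{1,1}_0$ and $[\![\Lambda^{2,0}]\!]$ are eigenspaces of $\star(\cdot\wedge\omega^2)$ with opposite eigenvalues $\mp 2$ (so your $c=1$), and likewise comparing with $\omega_I^2$ and $\omega_J^2$ in the $\Sp(2)$ case. Two harmless remarks. First, the sign in front of $\tfrac12\omega^2$ in your expression for $\Phi$ depends on conventions: with the complex orientation and the paper's eigenvalue $+1$ on $\Lambda^2_{21}$ it should be $-\tfrac12$, as in the paper's $\Psi$; this does not affect your subtraction, which only uses relative signs. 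Second, the separate $K$ step is unnecessary, because a form of type $(1,1)$ for $I$ and $J$ is automatically of type $(1,1)$ for $K=IJ$.
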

This theorem does not apply when the base manifold $X$ is not compact; see~\cite{madnickSpnInstantonsFourierMukaiTransform2024} for discussion and counterexamples in the non-compact case.
The proofs of both statements are similar.
To a closed integral 4-form $\Phi$ one associates an integer charge
\begin{equation}
    \label{eq:instantonnumber}
    \kappa_\Phi(F_E) := \frac{1}{8\pi^2} \int_X \Tr(F_E \wedge F_E) \wedge \Phi = \int_X \left(c_2(E) - \frac12 c_1(E)^2\right) \cup [\Phi].
\end{equation}
For a manifold with $\SU(4)$ structure, consider $\Psi := -\frac{1}{2} \omega \wedge \omega + \mathrm{Re}(\Omega)$ for a K\"ahler form $\omega$ and holomorphic volume form $\Omega$.
The $\SU(4)$-invariant eigenspaces of the bilinear form $\kappa_\Psi$ are subspaces of the $\Spin(7)$-invariant eigenspaces of the bilinear form $\kappa_\Phi$, where $\Phi$ is the Cayley form of Equation~\eqref{eq:cayley}, given the inclusion $\SU(4) \subset \Spin(7)$.
Diagonalising $\kappa_\Phi$ yields Theorem~\ref{thm:verlew}; see~\cite[\S 3.2]{lewisSpin7Instantons} for an explicit basis computation.

\section{Pseudodifferential approximations to the heat kernel}
\label{sec:pseudo}

The goal of this section is to construct a pseudodifferential approximation to the kernel of a Dirac operator and prove Lemma~\ref{lem:expsmall}.
The analysis builds on work of Charbonneau--Stern~\cite{charbonneauAsymptoticHodgeTheory2015}, which concerns bundles $E(k) = E \otimes L^{\otimes k}$ over a K\"ahler manifold with an ample holomorphic line bundle $L$.

For the rest of the paper, we work on a (necessarily flat) principally polarised abelian variety $(T^8, L)$ with positive line bundle $L$ of covariant constant curvature $F_L = -i \omega$, $\omega$ the K\"ahler form on $T^8$.
For $k \gg 0$, Theorem~\ref{thm:vanishing} gives $\ker D = \ker D^+ \subset \Gamma(S^+ \otimes E(k))$.
Let $\Pi \colon L^2(T^8, S^+ \otimes E(k)) \to \ker D^+$ denote the orthogonal projection to the Dirac kernel.
Let $\cspec$ denote a constant so that the spectrum of $D^+ D^-$ is contained in $[2k-\cspec, \infty)$ (such a constant is constructed in~\cite[\S 4.5]{bravermanVanishingTheoremsKernel1998}, for example).
It follows that the spectrum of $D^- D^+$ is contained in $\{0\} \cup [2k-\cspec, \infty)$.

Recall that for an operator $B$ with singular values $\lambda_j$, the Hilbert--Schmidt norm of $B$ is defined to be 
\[
    \| B\|_{HS} := \left(\sum_j \lambda_j^2\right)^{1/2}.
\]
The $2k - \cspec$ spectral gap implies that for any $tk \gg 1$, $\Pi$ is approximated well by $e^{-tD^- D^+}$ in the Hilbert--Schmidt norm.
Indeed, we have for some integers $A,B$ that
\begin{align}
    \label{eq:piexp} \| \Pi - e^{-tD^- D^+}\|_{HS} &\le e^{-(2k-\cspec)t/2} \| e^{-tD^- D^+/2}\|_{HS} \\
    \label{eq:epmdecay} \| e^{-tD^+ D^-}\|_{HS} &\in O(t^A k^B e^{-2tk}) \text{ as $tk \to \infty$}
\end{align}
(cf.~\cite[Equation~(2.10)]{charbonneauAsymptoticHodgeTheory2015}).
Henceforth, $D^2$ refers to the positive chirality operator $D^- D^+$ unless otherwise specified.
Equation~\eqref{eq:piexp} justifies approximating $\Pi$ in the Hilbert--Schmidt norm by first constructing an approximate heat kernel for $e^{-tD^2}$.
For such an approximate heat kernel $q_t$ with error
\begin{equation}
    \label{eq:epsilon}
    \epsilon_t := \left(\frac{\partial}{\partial t} + D^2 \right) q_t
\end{equation}
we compute for $Q_t$ the operator with kernel $q_t$ that
\begin{equation}
    \label{eq:expapprox}
    \|e^{-tD^2} - Q_t\|_{HS} \le \int_0^t \|\epsilon_s\|_{HS} \di s;
\end{equation}
thus the problem is construction of a good $q_t$ with $\epsilon_t$ small.

To this end, define the Mehler kernel,\footnote{After a complex rotation, our $\rho(t,k, |x-y|)$ appears as $\sqrt{k} K(\sqrt{k}|x-y|/2, 0; tk/2)$ in~\cite[p. 183]{pauliWaveMechanics}, for example.}
\begin{equation}
    \label{eq:mehler}
    \rho(t,k,|x-y|) := \left( \frac{k}{4\pi \sinh(tk)} \right)^4 e^{-\frac{k|x-y|^2}{4\tanh(tk)}}.
\end{equation}

Since $c(\omega)$ is skew-adjoint as an operator $\Gamma(S^\pm) \to \Gamma(S^\pm)$, restricted to any fibre its distinct eigenvalues are $i \lambda_1, \ldots, i\lambda_r$ for some $\lambda_1 < \cdots < \lambda_r$.
Note that the eigenvalues are constant because $\omega$ is covariant constant.
Let $\Spectrum^+, \Spectrum^- \subset \Spectrum = \{\lambda_1,\ldots,\lambda_r\}$ denote the subsets of the $\lambda_i$ whose corresponding $c(\omega)$-eigenvectors lie in $S^+, S^-$ respectively, and define
\begin{align*}
    \proj_{\pm}(tk) := \sum_{\lambda \in \Spectrum^{\pm}} e^{-\lambda tk} P_{i\lambda}.
\end{align*}
In dimension 8 then (as in Section~\ref{sec:algebra}),
\[
    \Spectrum^+ = \{ -4, 0, 4\}, \qquad \Spectrum^- = \{ -2, 2 \},
\]
whence
\begin{align*}
    \proj_+(tk) &= e^{4tk} P_{-4i} + P_0 + e^{-4tk}P_{4i}, \\
    \proj_-(tk) &= e^{2tk} P_{-2i} + e^{-2tk} P_{2i}.
\end{align*}
The constant $\lambda_* := \min \Spectrum$ plays an outsize role in the analysis to follow. $i\lambda_*$ is the \emph{dominant eigenvalue} for the problem, in the following sense.
On a spin K\"ahler manifold with K\"ahler form $\omega$, there is an identification of spinors with antiholomorphic forms.
Under this identification, the $i\lambda_*$ eigenspace of $c(\omega)$ is $\Omega^{0,0}$, and the Kodaira vanishing theorem proves that if $E$ is a bundle with $(1,1)$ curvature with respect to $\omega$ and $\Pi$ is the projection onto the kernel of the Dirac operator, then $\Pi(I - P_{i\lambda_*}) = 0$.
Thus the $i\lambda_*$ eigenspace is the only relevant one for computing the projection $\Pi$ in this setting.

In the setting where $E$ does not have $(1,1)$ curvature, it is not necessarily true that $\Pi(I - P_{i\lambda_*}) = 0$, or even that $\Pi(I - P_{i\lambda_*})$ is exponentially small relative to $\Pi P_{i\lambda_*}$ as $k \to \infty$.
Still, there is a precise sense (see Lemma~\ref{lem:expsmall}) in which the only terms in an approximation to $\Pi$ whose contribution as $k \to \infty$ are not exponentially small involve somehow the projection $P_{i\lambda_*}$.

In dimension 8, $\lambda_* = -4$, and the projection $P_{-4i}$ onto the $-4i$ eigenspace of $c(\omega)$ is rank 1.

Define
\[
    U_{\pm}(t,k,|x-y|) = \rho(t,k,|x-y|) \proj_{\pm}(tk)
\]
in the ansatz
\begin{equation}
    \label{eq:ansatz}
    q^{\pm}_t = \psi(x,y) U_\pm \sum_{\ell=0}^N u_\ell(x,y)
\end{equation}
for $u_\ell$ to be determined, where $\psi(x,y) \colon (S \otimes E(k))_y \to (S \otimes E(k))_x$ is the parallel transport map and $u_\ell(x,y)$ are sections of $\End(S_y\otimes E(k)_y)$.
Henceforth the $\pm$ decorations are suppressed.

Following~\cite[\S 3.4]{charbonneauAsymptoticHodgeTheory2015}, we work in geodesic normal coordinates about a point $y$.
For this choice of $q$, we obtain
\[
    \epsilon_t = \psi U(L + H) \sum_{\ell=0}^N u_\ell,
\]
where with $J$ the complex structure operator corresponding to the K\"ahler form $\omega$ we have
\begin{equation}
    L := \partial_t + \Delta_E + ikrJ\partial_r + \frac{kr}{\tanh(tk)}\partial_r,
\end{equation}
and
\begin{equation}
    \label{eq:H}
    H = H_h + \psi\inv \sum_{u,v \in \Spectrum} e^{(u-v)tk} P_{iu} c(F_E) P_{iv} \psi,
\end{equation}
a sum of a scalar operator $H_h$ and Clifford multiplication by the curvature weighted by eigenspace.

Next, we construct an approximate inverse to $L$.
\begin{definition}
    \label{def:weightfilt}
    Let $m, c \in \inte$.
    A partial differential operator $Z$ defined on a geodesic ball $B_y$ about $y$ belongs to $W^m_y\langle c\rangle$ if it can be expressed as a finite sum
    \begin{equation}
        \label{eq:weightfilt}
        Z = \sum_{\substack{2p+|I|-|J| \le m \\ c' \le c}} k^p\, (x-y)^J\, e^{c'tk}\, a_{I,J,p,c'}(x,y,tk)\, \frac{\partial}{\partial x^I},
    \end{equation}
    where each coefficient $a = a_{I,J,p,c'}(x,y,s) \in \End (S \otimes E(k))_y$ is a smooth function of $x,y,s$ satisfying for some $C, N$ that
    \[
        |a(x,y,s)| \le C(1+s)^N \qquad \text{for all $s \ge 0$, where $N = 0$ for $c' = c$}.
    \]
    Write $W^m_y := W^m_y\langle 0\rangle$, and let $W^{m,0}_y\langle c\rangle$ denote the subclass of operators of order zero.
\end{definition}
Analogous to the similar filtration of~\cite[\S 3.5]{charbonneauAsymptoticHodgeTheory2015}, the above definition allows for $a$ to grow at most polynomially quickly in $s$ at sub-leading exponential rates $c' < c$, while $a$ must be bounded at the leading rate $c' = c$.

\begin{remark}
    \label{rmk:weightremarks}

    $W$ is a filtration on partial differential operators.
    For all $m, m', c, c'$ it holds that
    \[
        W^m_y\langle c \rangle \circ W^{m'}_y \langle c'\rangle \subset W^{m+m'}_y \langle c+c'\rangle.
    \]
    In particular, $e^{ctk} W^m_y\langle c' \rangle \subset W^m_y\langle c+c'\rangle$.
    Multiplication by $k$ has weight $+2$ (in the sense that $k W^m_y\langle c\rangle \subset W^{m+2}_y \langle c \rangle$), multiplication by $t$ has weight $-2$, multiplication by $(x-y)^j$ has weight $-1$, and $\frac{\partial}{\partial x^i}$ has weight $+1$.
\end{remark}

For multi-indices $J, K$ define
\[
    \mu_{JK}(s) := \sinh(s)^{|J|+|K|}e^{(|K|-|J|)s}.
\]
Compute on a polynomial $e^{ctk} a_{JK}(tk) z^J \overline{z}^K$ about $y$ that
\begin{equation}
    L(e^{ctk}a_{JK}(tk) z^J \overline{z}^K) = \mu_{JK}(tk)\inv (\partial_t + \Delta_E) (\mu_{JK}(tk) e^{ctk} a_{JK}(tk) z^J \overline{z}^K).
\end{equation}
Define
\begin{equation}
    \label{eq:heatop}
    L\inv(e^{ctk} a_{JK}(tk) z^J \overline{z}^K) := \mu_{JK}(tk)\inv \int_0^t \int_{\real^8} \frac{e^{-\frac{|z-w|^2}{4(t-s)}}}{(4\pi(t-s))^4} (\mu_{JK}(sk) e^{csk}a_{JK}(sk) w^J \overline{w}^K) \di w\ \di s. 
\end{equation}

Fix an integer $\TD > 0$.
For any smooth function $A$ of $z, \overline{z}, tk$, define $L\inv A := L\inv p_{2\TD}$, where $p_{2\TD}$ is the degree $2\TD$ Taylor polynomial of $A$ about $y$.
\begin{lemma}[{Equation~(3.32),~\cite{charbonneauAsymptoticHodgeTheory2015}}]
    \label{lem:approxl}
    $LL\inv - I \in W^{-2\TD}_y$.
\end{lemma}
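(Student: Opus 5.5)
The plan is to check the identity term by term on monomials and then control the Taylor remainder. Since $L\inv A$ is defined as $L\inv p_{2\TD}$, with $p_{2\TD}$ the degree-$2\TD$ Taylor polynomial of $A$ about $y$, I will prove two things. First, $L L\inv$ is the identity on each monomial $e^{ctk}a_{JK}(tk)z^J\overline z^K$. Second, the discrepancy $A - p_{2\TD}$ is an operator in $W^{-2\TD}_y$ applied to $A$.

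For the first step I use the conjugation identity $L = \mu_{JK}(tk)\inv(\partial_t+\Delta_E)\mu_{JK}(tk)$, which holds on monomials of bidegree $(J,K)$. Equation~\eqref{eq:heatop} is exactly Duhamel's formula for the flat heat operator $\partial_t+\Delta$ with zero initial data, applied to $\mu_{JK}\,e^{csk}a_{JK}\,w^J\overline w^K$ and then multiplied by $\mu_{JK}\inv$.

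The heat semigroup maps polynomials to polynomials of no larger degree. So $(\partial_t+\Delta)$ of the Duhamel integral returns the source exactly, up to the difference between $\Delta_E$ and the flat Laplacian in geodesic normal coordinates. On the flat torus with the parallel-transport gauge, this difference is a first-order term of the form $F_E(x-y)\partial$ plus zeroth-order terms. Expanding those coefficients in Taylor series, I would check that the non-polynomial parts have weight at most $-2\TD$ in the sense of Remark~\ref{rmk:weightremarks}.

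For the remainder step, write $A-p_{2\TD}=\sum_{|J|+|K|=2\TD+1}(x-y)^{J}(\overline{x-y})^{K}b_{JK}(x,y,tk)$. The factor $(x-y)^{J}(\overline{x-y})^K$ of total degree $2\TD+1$ gives weight $\le -2\TD-1$. The Taylor-remainder coefficients $b_{JK}$ are smooth in $x,y$. Their growth in $s=tk$ must be checked against Definition~\ref{def:weightfilt}: only polynomial growth is allowed at sub-leading exponential rates, and boundedness is required at the leading rate.

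The main obstacle is tracking the $s$-dependence created by $\mu_{JK}(tk)\inv$ and the $s$-integral in Equation~\eqref{eq:heatop}. Integrating $\mu_{JK}(sk)e^{csk}$ from $0$ to $t$ and dividing by $\mu_{JK}(tk)$ yields factors like $t\cdot(\text{bounded function of }tk)=k\inv\cdot(\text{function of }tk)$. These must be verified to be bounded at the leading exponential rate and at most polynomial in $tk$ otherwise. I would do this by splitting into $tk\le1$, where one Taylor-expands $\sinh$, and $tk\ge1$, where $\sinh(s)\sim e^s/2$ and the ratio $\int_0^t\mu(sk)\,ds/\mu(tk)$ is $O(k\inv)$ uniformly.

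That computation is precisely Charbonneau--Stern's argument leading to their Equation~(3.32), and the only new ingredient here is the extra factor $e^{ctk}$. That factor is absorbed by the shifted filtration $W\langle c\rangle$. Because $L\inv$ is applied to $L$-errors in $W\langle 0\rangle$, the composite lands in $W^{-2\TD}_y$.
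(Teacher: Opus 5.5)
Your first step fails, and the rest of the argument depends on it.

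\textbf{The bidegree problem.} The identity $L=\mu_{JK}\inv(\partial_t+\Delta_E)\mu_{JK}$ holds only on terms of bidegree $(J,K)$. The first-order part $L_0:=L-\Delta_E$ acts on $g(tk)\,z^{J'}\overline z^{K'}$ as $\mu_{J'K'}\inv\partial_t(\mu_{J'K'}g)\,z^{J'}\overline z^{K'}$, so the conjugating factor depends on the bidegree. The Duhamel integral in \eqref{eq:heatop} does not preserve bidegree. Since $\Delta_E=-4\sum_i\partial_{z_i}\partial_{\overline z_i}$, whenever some $z_i$ and $\overline z_i$ both occur in $z^J\overline z^K$, the heat semigroup produces terms of bidegree $(J',K')$ with $|J'|=|J|-m$, $|K'|=|K|-m$, $m\ge1$. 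On those terms $\mu_{J'K'}=\sinh^{-2m}\mu_{JK}\neq\mu_{JK}$.

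\textbf{Explicit counterexample.} Take $A=z_1\overline z_1$. Then $L\inv A=\alpha(t)\,z_1\overline z_1+\beta(t)$, where $\alpha=\sinh^{-2}(tk)\int_0^t\sinh^2(sk)\,\di s$ and $\beta=4\sinh^{-2}(tk)\int_0^t(t-s)\sinh^2(sk)\,\di s=k^{-2}-t^2\sinh^{-2}(tk)$. Since $L$ acts on $\beta$ simply as $\partial_t$,
\[
  LL\inv(z_1\overline z_1)-z_1\overline z_1=\beta'(t)-4\alpha(t)=-2k\coth(tk)\,\beta(t)=-\frac{2\coth(tk)}{k}\Bigl(1-\frac{(tk)^2}{\sinh^2(tk)}\Bigr),
\]
which tends to $-2k\inv$ as $tk\to\infty$. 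This has weight $-2$, the same as $A$. So with \eqref{eq:heatop} read literally, $LL\inv-I$ does not lower weight at all. Your remainder step needs $LL\inv p_{2\TD}=p_{2\TD}$ exactly, so it has nothing to stand on. The paper gives no argument beyond the citation to Charbonneau--Stern, so exactness has to be built into the definition of $L\inv$.

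\textbf{What is missing.} You need an inverse that is exact on polynomials. For example, invert $L_0$ by
\[
  g(tk)\,z^J\overline z^K\mapsto\mu_{JK}(tk)\inv\int_0^t\mu_{JK}(sk)g(sk)\,\di s\;z^J\overline z^K,
\]
and set $L\inv:=\sum_{j\ge0}(-L_0\inv\Delta_E)^jL_0\inv$. This is a finite sum on polynomials because $\Delta_E$ lowers degree by two. Then $LL\inv p=p$, and your Taylor-remainder argument is the whole proof. The remainder $A-p_{2\TD}$ carries a factor of degree $2\TD+1$ in $x-y$. Its coefficients inherit the $s$-dependence of $A$, given the same bounds on their $x$-derivatives, because the expansion is only in $x$.

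\textbf{Smaller points.}
\begin{enumerate}
\item $\Delta_E$ is the Euclidean Laplacian; it is what the Euclidean heat kernel in \eqref{eq:heatop} inverts. The connection terms of $E$ belong to $H_h$, not to $L$. Had they been in $L$, your treatment would not work: the linear term $F_E(y)(x-y)\partial$ has weight $0$, is not inverted by $L\inv$, and would leave an error of weight $-2$, not $-2\TD$.
\item Your `main obstacle' paragraph concerns the weight of $L\inv$ itself (Lemma~\ref{lem:weightbasic}), which does not enter $LL\inv-I$ once exactness holds.
\item The uniform $O(k\inv)$ bound you state there is false for $K=\emptyset$. In that case $\mu_{J\emptyset}(s)=\bigl((1-e^{-2s})/2\bigr)^{|J|}$ is bounded, and the ratio is about $t=k\inv\cdot tk$.
\end{enumerate}
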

Lemma~\ref{lem:approxl} justifies the introduction of $L\inv$ as an approximate inverse to $L$.

\begin{lemma}
    \label{lem:weightbasic}
    For $Z \in W^{\ell,0}_y\langle c \rangle$,
    \[
        L\inv Z \in W^{\ell-2,0}_y\langle \max\{c,0\}\rangle.
    \]
\end{lemma}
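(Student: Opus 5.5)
By linearity of $L\inv$, and because Definition~\ref{def:weightfilt} writes $Z$ as a finite sum, it is enough to treat a single order-zero term
\[
    Z = k^p (x-y)^J e^{c'tk} a(x,y,tk), \qquad 2p - |J| \le \ell,\quad c' \le c .
\]
Two simplifications come first. Since $L\inv$ only sees the degree-$2\TD$ Taylor polynomial about $y$, I expand $a$ in $x-y$. A higher-order monomial has larger $|J|$ and hence lower weight, so it stays inside the same class. I then rewrite each $(x-y)^J$ in the complex coordinates as a combination of monomials $z^{J'}\overline{z}^{K'}$ with $|J'|+|K'| = |J|$. The constant $k^p$ factors out of $L\inv$. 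It therefore suffices to compute $L\inv(e^{c'tk}a(tk) z^J\overline{z}^K)$ from Equation~\eqref{eq:heatop} and to show that it equals $k^{p}$ times a finite sum of terms $k^{q}(x-y)^{J''}e^{c''tk}b(tk)$ with weight $2(p+q)-|J''| \le \ell-2$, with $c'' \le \max\{c,0\}$, and with $b$ obeying the growth conditions.

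The main step is to evaluate the Gaussian convolution explicitly. Since $\Delta$ acts on polynomials, the heat semigroup on a polynomial is a finite sum: $e^{(t-s)\Delta}(w^J\overline{w}^K) = \sum_m \frac{((t-s)\Delta_0)^m}{m!}\,z^J\overline{z}^K$, where $\Delta_0$ is the flat Laplacian. Each application of $\Delta_0$ removes one $z$ and one $\overline{z}$. The $m$th term is therefore $(t-s)^m z^{J-\alpha}\overline{z}^{K-\beta}$ with $|\alpha|=|\beta|=m$, and
\[
    L\inv(\cdot) = \sum_m \mu_{JK}(tk)\inv \int_0^t (t-s)^m \mu_{JK}(sk)e^{c'sk}a(sk)\,\di s \; z^{J-\alpha}\overline{z}^{K-\beta}.
\]
Substituting $s=\sigma/k$, $t = \tau/k$ turns the integral into $k^{-m-1} G_m(\tau)$, where
\[
    G_m(\tau) = \mu_{JK}(\tau)\inv\int_0^\tau(\tau-\sigma)^m\mu_{JK}(\sigma)e^{c'\sigma}a(\sigma)\,\di\sigma .
\]
For the weight count, $(x-y)$-degree drops by $2m$ and the power of $k$ drops by $m+1$. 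The net weight change is $-2(m+1)+2m=-2$, which gives the index $\ell-2$.

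The remaining issue, which I expect to be the real obstacle, is to show that $G_m(\tau)$ has the form $e^{c''\tau}b(\tau)$ with $c''\le\max\{c,0\}$, and that $b$ is bounded at the leading rate and polynomially bounded otherwise. The growth of $\mu_{JK}(\sigma)$ is $\sim e^{2|K|\sigma}$ for $\sigma$ large, and near $\sigma = 0$ it is $\sim \sigma^{|J|+|K|}$. The ratio $\mu_{JK}(\sigma)/\mu_{JK}(\tau)$ for $\sigma\le\tau$ is bounded by $C e^{-2|K|(\tau-\sigma)}$ times a bounded factor; the factor $(\sinh\sigma/\sinh\tau)^{|J|+|K|}$ is harmless near $0$. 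I will split into cases.

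If $c' > -2|K|$, the integral is dominated by $\sigma$ near $\tau$ and $G_m \approx e^{c'\tau}\cdot O(1)$. At the leading rate $c'=c$, with $a$ bounded, the bound is uniform because $\int_0^\tau (\tau-\sigma)^m e^{-(c'+2|K|)(\tau-\sigma)}\di\sigma$ converges. If $c' \le -2|K| \le 0$, the integral is dominated by small $\sigma$, and we get at most $e^{-2|K|\tau}$ times a polynomial in $\tau$, which is $\le e^{0\cdot\tau}\cdot$polynomial. The borderline case $c' = -2|K|$ produces one extra power of $\tau$, and this is exactly why the conclusion allows polynomial growth only at exponents strictly below the leading one. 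It is also why the leading exponent is capped at $\max\{c,0\}$ rather than $c$. I will handle it by putting these terms at exponent $0$ when $c<0$, and absorbing them into sub-leading rates otherwise. Finally, I will check that $G_m$ is smooth in $\tau$ down to $\tau=0$; the vanishing of $\mu_{JK}$ at $0$ is compensated by the same vanishing in the integrand. Summing over the finitely many terms then gives $L\inv Z\in W^{\ell-2,0}_y\langle\max\{c,0\}\rangle$.
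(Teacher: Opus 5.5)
\textbf{Gap: the borderline case $c' = -2|K|$ when $c = 0$.} You propose to absorb borderline terms into sub-leading rates, but here there is no sub-leading rate to absorb into. With $c' = c = 0$ the borderline condition forces $|K| = 0$. Then $m = 0$, since each $\Delta_0$ consumes one $\overline{z}$. Also $\mu_{JK}(\sigma) = \bigl((1-e^{-2\sigma})/2\bigr)^{|J|}$, so $\mu_{JK}(\sigma)/\mu_{JK}(\tau)\to 1$ and $G_0(\tau)$ behaves like $\int_0^\tau a(\sigma)\,\di\sigma$. This grows linearly when $a$ is a nonzero constant, and it sits at the leading output rate $\max\{c,0\}=0$.

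No rewriting fixes this. Take $Z = I \in W^{0,0}_y\langle 0\rangle$; then $L\inv Z = tI = k^{-1}(tk)I$. Fix $s = tk$, set $x=y$, and compare coefficients of $k^{-1}$ in any representation as in Equation~\eqref{eq:weightfilt}. This would force $s = \sum_{c''\le 0} e^{c''s}a_{c''}(y,y,s)$, whose right-hand side is bounded, which is impossible. So with the clause ``$N=0$ for $c'=c$'' read literally, the statement is false for $c=0$, and no argument can close this step.

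Your analysis does give the strict statement for $c\neq 0$, with one adjustment. In the regime $c'<-2|K|$, keep the output at rate $-2|K|$ when $|K|>0$. When $|K|=0$, use $m\le |K|$ to get a bounded rate-$0$ coefficient, rather than settling for ``polynomial at rate $0$''.

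\textbf{What the paper actually proves.} The paper's proof never claims boundedness at the leading rate. It expands $\mu_{JK}(sk)/\mu_{JK}(tk)$ as a double series in $e^{-2jsk}$ and $e^{-2j'tk}$. It then bounds $\int_0^t e^{Bs}a(s)\,\di s$ in the cases $B=0$, $B<0$, $B>0$, with $B=c+2(|K|-j)$. Its conclusion is only that each term is at most $e^{\max\{c,0\}tk}$ times a polynomial in $tk$. Its $B=0$ case produces exactly the $(1+tk)^{N+1}$ growth described above.

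So the paper proves the relaxed statement, in which polynomial factors are tolerated at every rate. That is the reading consistent with Remark~\ref{rmk:weightremarks} (multiplication by $t$ has weight $-2$) and with the $(1+tk)^N$ factors in later estimates.

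\textbf{Your route versus the paper's.} If you aim for the relaxed statement, your argument is correct and takes a different route.
\begin{itemize}
\item You use the pointwise inequality $\mu_{JK}(\sigma)/\mu_{JK}(\tau)\le e^{-2|K|(\tau-\sigma)}$ for $\sigma\le\tau$. It holds with constant $1$, because $\sinh\sigma/\sinh\tau\le e^{\sigma-\tau}$. This replaces the paper's infinite $j'$-series, whose termwise bounds do not sum uniformly as $tk\to 0$ (a point the paper passes over).
\item You correctly keep the $(\tau-\sigma)^m$ factor from the heat semigroup, which the paper's three-case bound silently drops.
\end{itemize}
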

\begin{proof}
    That $L\inv$ lowers weight by 2 follows from Equation~\eqref{eq:heatop} and Remark~\ref{rmk:weightremarks}.
    The content of Lemma~\ref{lem:weightbasic} is the statement on exponential growth rates.

    It suffices by linearity to consider a single term of the Taylor expansion, $p(z, tk) = e^{ctk} a(tk) z^J \overline{z}^K$.
    Compute
    \begin{equation}
        \label{eq:linvp}
        L\inv p =  \int_0^t \frac{\mu_{JK}(sk)}{\mu_{JK}(tk)} e^{csk} a(sk) \int_{\real^8} \frac{e^{-\frac{|z-w|^2}{4(t-s)}}}{(4\pi(t-s))^4}  w^J \overline{w}^K \di w \di s.
    \end{equation}
    By the Wick formula (cf.~\cite[Theorem 3.2.5]{landoGraphsSurfacesTheir2004}), the $w$ integral is a polynomial in $z, t-s$ of degree at most $\frac{|J|+|K|}{2} \le \TD$ in $t-s$.

    Note that
    \begin{align*}
        \mu_{JK}(sk) &= e^{2|K|sk} \left(\frac{1 - e^{-2sk}}{2}\right)^{|J|+|K|} =  2^{-(|J|+|K|)} e^{2|K|sk} \sum_j (-1)^j \binom{|J|+|K|}{j} e^{-2jsk}.
    \end{align*}
    We also have a generalised binomial series for $\mu_{JK}(tk)\inv$:
    \begin{align*}
        \mu_{JK}(tk)\inv &= 2^{|J|+|K|} e^{-2|K|tk} (1-e^{-2tk})^{-|J|-|K|}= 2^{|J|+|K|} e^{-2|K|tk} \sum_{j' \ge 0} \binom{|J|+|K|+j'-1}{j'} e^{-2j'tk}.
    \end{align*}
    Thus
    \begin{equation}
        \label{eq:expandedmu}
        \frac{\mu_{JK}(sk)}{\mu_{JK}(tk)} = 2^{|J|+|K|} e^{-2|K|(t-s)k} \sum_{j, j' \ge 0}(-1)^j  \binom{|J|+|K|}{j} \binom{|J|+|K|+j'-1}{j'} e^{-2j'tk-2jsk}.
    \end{equation}
    For arbitrary $a(s)$ bounded by a polynomial in $s$ (i.e.\ with $|a(s)| \le C (1+s)^N$ for some $C,N$ and for all $s \ge 0$), compute
    \[
        \left|\int_0^t e^{Bs} a(s) \di s\right| \le C\int_0^t e^{Bs} (1+s)^N \di s \le C'\begin{cases}(1+t)^{N+1} & \text{if $B = 0$} \\ (1+t)^N & \text{if $B < 0$} \\ e^{Bt} (1+t)^N & \text{if $B > 0$}. \end{cases}
    \]
    Apply with $B = c+2(|K|-j)$:
    \begin{equation}
        \label{eq:polybound}
        e^{-2(|K|+j')tk} \left|\int_0^t e^{(c+2(|K|-j))sk} a(sk) \di s\right| \le C'\begin{cases} e^{-2(|K|+j')tk} (1+tk)^{N+1} &\text{if $B = 0$} \\ e^{-2(|K|+j')tk} (1+tk)^{N} &\text{if $B < 0$} \\ e^{(c-2j-2j')tk} (1+tk)^N & \text{if $B>0$} \end{cases}.
    \end{equation}
    Note that for all possible $|K|, j, j', c$ it holds that
    \[
        -2(|K|+j'),c-2j-2j' \le \max\{c,0\}.
    \]
    Substitute Equation~\eqref{eq:expandedmu} into Equation~\eqref{eq:linvp}, and use Equation~\eqref{eq:polybound} to bound each term by $e^{\max\{c,0\}tk} q(tk)$ for some $q(tk)$ bounded by a polynomial in $tk$.
\end{proof}

\begin{corollary}
    \label{cor:evstring}
    Let $Z_i \in W^{m_i, 0}_y \langle c_i \rangle$ for $i = 1, \ldots, \ell$.
    Then
    \[
        L\inv Z_1 L\inv Z_2 \cdots L\inv Z_\ell \in W^{\sum m_i - 2\ell, 0}_y\langle c \rangle, 
    \]
    where
    \[
        c := \max_{0 \le j \le \ell} \sum_{i=1}^j c_i
    \]
    denotes the largest sum of any (possibly empty) prefix of the sequence $(c_1, \ldots, c_\ell)$.\footnote{That is, one of the subsequences $\{\}, \{c_1\}, \{c_1, c_2\}, \ldots$}
\end{corollary}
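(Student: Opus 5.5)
We argue by induction on $\ell$, peeling off the outermost factor $L\inv Z_1$. For $\ell = 0$ the composition is the identity, which lies in $W^{0,0}_y\langle 0\rangle$; this matches the empty prefix sum, so the base case holds. For $\ell = 1$ the claim is Lemma~\ref{lem:weightbasic} directly, since $\max\{0, c_1\}$ is exactly the largest prefix sum of $(c_1)$.

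For the inductive step, suppose $Y := L\inv Z_2 \cdots L\inv Z_\ell \in W^{\sum_{i\ge 2} m_i - 2(\ell-1), 0}_y\langle c'\rangle$, where
\[
c' = \max_{1 \le j \le \ell} \sum_{i=2}^{j} c_i
\]
is the largest prefix sum of $(c_2,\ldots,c_\ell)$. Both $Z_1$ and $Y$ are order-zero operators, that is, multiplication operators. By Remark~\ref{rmk:weightremarks}, their product satisfies
\[
Z_1 Y \in W^{m_1 + \sum_{i \ge 2} m_i - 2(\ell-1),0}_y\langle c_1 + c'\rangle .
\]
Lemma~\ref{lem:weightbasic} then gives
\[
L\inv (Z_1 Y) \in W^{\sum m_i - 2\ell, 0}_y\langle \max\{c_1 + c', 0\}\rangle .
\]
Finally, $\max\{0, c_1 + c'\}$ equals the largest prefix sum of $(c_1,\ldots,c_\ell)$: the empty prefix contributes $0$, and every nonempty prefix has the form $c_1 + (\text{a prefix of } (c_2,\ldots,c_\ell))$. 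This closes the induction.

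The one step needing care is reading $L\inv Z_1 L\inv Z_2\cdots$ as nested applications, $L\inv\bigl(Z_1\, L\inv(Z_2 \cdots)\bigr)$, so that $L\inv$ always acts on a function (an order-zero term) and Lemma~\ref{lem:weightbasic} applies. We also need the filtration to be monotone in $c$, i.e. $W\langle c\rangle \subset W\langle c''\rangle$ for $c \le c''$. This holds because lower rates in Definition~\ref{def:weightfilt} only require polynomially bounded coefficients: if $c''>c$, the terms at rate $c$ become sub-leading and need only polynomial bounds, which is weaker than the boundedness required at the leading rate $c$. Beyond this, no estimate beyond Lemma~\ref{lem:weightbasic} is required.
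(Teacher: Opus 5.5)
Your proof is correct and is essentially the paper's argument. The paper likewise nests Remark~\ref{rmk:weightremarks} and Lemma~\ref{lem:weightbasic} from the innermost factor outward, and its identity $\max\{c_{\ell-1}+\max\{c_\ell,0\},0\}=\max\{c_{\ell-1}+c_\ell,\,c_{\ell-1},\,0\}$ is exactly your step $\max\{c_1+c',0\}=c$; the monotonicity of the filtration in $c$ that you verify is true but never used, since the rates match exactly at each stage of the induction.
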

\begin{proof}
    By Remark~\ref{rmk:weightremarks} and Lemma~\ref{lem:weightbasic} applied to $Z_\ell$,
    \[
        Z_{\ell-1} L\inv Z_\ell \in W^{m_{\ell-1} + m_\ell - 2, 0}_y \langle c_{\ell-1} + \max\{c_\ell,0\} \rangle.
    \]
    Applying Lemma~\ref{lem:weightbasic} again,
    \[
        L\inv Z_{\ell-1} L\inv Z_\ell \in W^{m_{\ell-1} + m_\ell - 2 \cdot 2, 0}_y \langle \max\{c_{\ell-1} + \max\{c_\ell,0\}, 0\} \rangle.
    \]
    Observe that $\max\{c_{\ell-1} + \max\{c_\ell,0\},0\} = \max\{c_{\ell-1} + c_\ell, c_{\ell-1}, 0\}$; the proof is finished by induction.
\end{proof}

Fix $\mu_1, \ldots, \mu_{\ell+1} \in \Spectrum$ and for $F_j \in W^{0,0}_y$ define $Z_j = e^{(\mu_j - \mu_{j+1})tk} P_{i \mu_{j}} F_j P_{i \mu_{j+1}}$.
The sum $\sum_{j=1}^\ell (\mu_j - \mu_{j+1}) = \mu_1 - \mu_{\ell+1}$ telescopes, so Corollary~\ref{cor:evstring} yields
\begin{equation}
    \label{eq:decay}
    e^{-\mu_1 tk} L\inv Z_1 \cdots L\inv Z_\ell \in W^{-2\ell,0}_y \langle - \min_j \mu_j \rangle.
\end{equation}

\begin{lemma}
    \label{lem:mineigen}
    Let $Z_j \in W^{0,0}_y \langle \mu_j - \mu_{j+1} \rangle$ be as above and suppose that the coefficients $a^j_{I,J,p,c'}$ of $Z_j$ (cf.~Equation~\eqref{eq:weightfilt}) are all $O((1+s)^N)$ as $s \to \infty$ for some fixed $N$.
    There exists a constant $C$ so that 
    \[
        \int_{T^8 \times T^8} | \psi U_+(x,y) (L\inv Z_1 \cdots L\inv Z_\ell)(x,y)|^2 \di x\, \di y \le C \begin{cases} k^{4-2\ell} (1+tk)^N e^{-2(4+\min_j \mu_j) tk} & t \ge k\inv \\ 2^N t^{-(4-2\ell)} & t < k\inv \end{cases}.
    \]
\end{lemma}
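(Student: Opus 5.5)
By \eqref{eq:decay}, the operator $e^{-\mu_1 tk} L\inv Z_1 \cdots L\inv Z_\ell$ lies in $W^{-2\ell,0}_y\langle -\min_j \mu_j\rangle$. By Definition~\ref{def:weightfilt} it is therefore a finite sum of multiplication operators of the form
\[
k^p (x-y)^J e^{c'tk} a(x,y,tk), \qquad 2p - |J| \le -2\ell, \quad c' \le -\min_j \mu_j .
\]
The coefficients satisfy $|a| \le C(1+tk)^{N'}$, with $N'$ controlled by the given $N$ because $L\inv$ only integrates in time.

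The plan is to combine this pointwise description with the explicit Gaussian $\rho$ and the exponential weights in $\proj_+(tk)$, and then integrate. Since $\psi$ is unitary, it suffices to bound $|U_+|$ times the operator pointwise. The largest weight in $\proj_+(tk)$ is $e^{4tk}P_{-4i}$, and it is attached to the $\mu_1$ slot. So I would write $U_+ = \rho \sum_{\lambda} e^{-\lambda tk}P_{i\lambda}$, note that $P_{i\lambda}L\inv Z_1\cdots$ forces $\mu_1 = \lambda$, and obtain
\[
|U_+ L\inv Z_1 \cdots L\inv Z_\ell| \lesssim \rho\,(1+tk)^{N'} \sum k^p |x-y|^{|J|} e^{-\min_j \mu_j tk}.
\]
Since $\mu_1 \ge \min_j\mu_j$ and $e^{-\mu_1 tk}$ is absorbed into the class, the total exponential factor is $e^{-\min_j \mu_j tk}$.

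Next comes the Gaussian integral in $x-y$. On $T^8$ one either sums the periodised Gaussian or localises near the diagonal, since $\psi$ is only defined there and the cutoff error is exponentially small. Then
\[
\int \rho^2 |x-y|^{2|J|}\,\di x = \left(\frac{k}{4\pi\sinh tk}\right)^8 \int e^{-\frac{k|w|^2}{2\tanh tk}} |w|^{2|J|}\,\di w \sim \left(\frac{k}{\sinh tk}\right)^8 \left(\frac{\tanh tk}{k}\right)^{4+|J|}.
\]
The two regimes are treated separately.

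\textbf{Case $tk \ge 1$.} Here $\sinh tk \sim e^{tk}/2$ and $\tanh tk \sim 1$, so the integral is $\sim k^{4-|J|}e^{-8tk}$. Multiplying by $k^{2p}e^{-2\min\mu_j tk}(1+tk)^{2N'}$ and using $2p-|J|\le -2\ell$ gives $k^{4-2\ell}e^{-2(4+\min_j\mu_j)tk}$ times a polynomial in $tk$. Integrating over $y \in T^8$ only contributes a volume factor.

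\textbf{Case $tk < 1$.} Here $\sinh tk \sim tk$ and $\tanh tk\sim tk$, so the integral is $\sim t^{-4}t^{|J|}k^{0}\cdot$. More precisely it is $(k/tk)^8(t)^{4+|J|} = t^{-4+|J|}$. Multiplying by $k^{2p}$ and using $k<1/t$ with $p$ possibly positive gives $k^{2p}t^{|J|} \le t^{|J|-2p} \le t^{2\ell}$, which yields $t^{-(4-2\ell)}$. The exponential and polynomial factors are bounded by constants, for instance $(1+tk)^N\le 2^N$, which is the source of the $2^N$.

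\textbf{Main obstacle.} The delicate step is the bookkeeping of exponents. I must verify that the polynomial growth is exactly the stated $(1+tk)^N$. This uses the requirement that at the leading rate $c' = -\min\mu_j$ the coefficient is bounded, so that sub-leading rates $c' < -\min\mu_j$ absorb their polynomial factors into a weaker exponential. I also need the pairing of $P_{i\lambda}$ in $\proj_+$ with $\mu_1$ to be legitimate. If $\mu_1\notin\Spectrum^+$, the term vanishes by chirality.
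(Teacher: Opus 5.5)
Your route is the paper's. You combine \eqref{eq:decay} with $\proj_+(tk)P_{i\mu_1}=e^{-\mu_1 tk}P_{i\mu_1}$ to get $\psi U_+ L\inv Z_1\cdots L\inv Z_\ell\in\rho\,\psi\,W^{-2\ell,0}_y\langle-\min_j\mu_j\rangle$, and then take Gaussian moments of $\rho^2$ in the two regimes. For $tk\ge 1$ your computation is correct; terms with $p<0$ only help there, since $k^{4+2p-|J|}\le k^{4-2\ell}$.

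The step that fails is in the regime $tk<1$. The inequality $k^{2p}t^{|J|}\le t^{|J|-2p}$ holds only for $p\ge 0$, and terms with $p<0$ are exactly what $L\inv$ creates. For example, for constant $a$ and $c\neq 0$, \eqref{eq:heatop} gives $L\inv(e^{ctk}a)=k\inv\cdot\frac{e^{ctk}-1}{c}\,a$, which is a term with $p=-1$, $J=\emptyset$. If all you use is the filtration bound $|a(s)|\le C(1+s)^N$, you estimate its contribution by $k^{-2}t^{-4}$. That is not $O(t^{-2})$ as $tk\to 0$. So membership in $W^{-2\ell,0}_y\langle\cdot\rangle$ alone cannot give the second line of the lemma. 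The filtration forgets that the coefficient of a negative power $k^{p}$ produced by $L\inv$ vanishes at least like $s^{-p}$ at $s=0$.

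To close the gap, work directly from \eqref{eq:heatop} when $t<k\inv$, keeping the factors of $t$ produced by $L\inv$ as powers of $t$ rather than rewriting them as $k\inv\cdot tk$.
\begin{itemize}
\item For $0\le s\le t<k\inv$, the ratio $\mu_{JK}(sk)/\mu_{JK}(tk)$ and the weights $e^{csk}$ are bounded.
\item The heat average of $w^J\overline{w}^K$ is a polynomial in $z$ and $t-s$ of the same homogeneity, so each $L\inv$ contributes an honest factor $\int_0^t \di s\le t$.
\item Each $Z_j$ contributes at most $k^p|x-y|^{|J|}$ with $2p-|J|\le 0$. This is $O(1)$ at the scale $|x-y|^2\sim t$, using $k^p\le t^{-p}$ only when $p\ge 0$.
\end{itemize}
This gives $|L\inv Z_1\cdots L\inv Z_\ell|\lesssim t^{\ell}$ there, hence $t^{2\ell-4}$ after integrating against $\rho^2\approx(4\pi t)^{-8}e^{-|x-y|^2/(2t)}$. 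The paper's proof is equally terse in this regime (it defers to Charbonneau--Stern), and only $t\ge k\inv$ is used in Lemma~\ref{lem:expsmall}. Still, your written argument does not prove the $t<k\inv$ bound.

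A smaller point about your last paragraph: the same mechanism defeats the boundedness at the leading rate that you rely on. When the rates vanish (e.g.\ all $\mu_j$ equal), $L\inv$ of a bounded coefficient is $k\inv(tk)a$, and $\ell$ iterations produce a genuine factor $(tk)^\ell$. Indeed, the proof of Lemma~\ref{lem:weightbasic} only yields a polynomial $q(tk)$ at the leading rate. So you cannot pin the exponent to the $N$ of the hypothesis. The estimate holds with $(1+tk)^{N'}$ for some $N'$ depending on $N$ and $\ell$, which is all that is used downstream.
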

\begin{proof}
    The estimate is a minor modification of Proposition~3.7 of~\cite{charbonneauAsymptoticHodgeTheory2015}, where the interested reader may find a detailed proof.
    Since $\proj P_{i\mu_1} = e^{-\mu_1 tk} P_{i\mu_1}$,
    \begin{align*}
        \psi U_+(x,y) (L\inv Z_1 \cdots L\inv Z_\ell) &\in \rho(t,k,|x-y|) \psi W^{-2\ell,0}_y \langle -\min_j \mu_j \rangle. 
    \end{align*}
    Supposing that $t \ge k\inv$, approximate
    \[
        \rho(t,k,|x-y|) = \left(\frac{k}{4\pi\sinh(tk)}\right)^4 e^{-\frac{kr^2}{4\tanh(tk)}} \approx \frac{k^4}{(2\pi)^4}e^{-kr^2/4} e^{-4tk}
    \]
    and the computation becomes a standard Gaussian integral.
    When $t < k\inv$ (that is, $tk < 1$), we have $(1 + tk)^N \le 2^N$ and absorb $e^{-2(4+\min_j \mu_j) tk}$ into the constant $C$.
\end{proof}

We now define the $u_\ell$ in Equation~\eqref{eq:ansatz} inductively as $u_0 = I, u_\ell = -L\inv H u_{\ell-1}$.
The following estimates are minor modifications of Propositions~3.7, 3.9 and 3.10 of~\cite{charbonneauAsymptoticHodgeTheory2015}.
Note that unlike Lemma~\ref{lem:mineigen}, which applies only to the estimate for the $D^- D^+$ operator, these estimates are valid for both choices of chirality.

\begin{lemma}
    \label{lem:hswt}
    Let $Z \in W^{-2\ell,0}_y$.
    There exists a constant $C$ such that for $k \gg 1$,
    \[
        \| \psi U Z\|_{HS} \le C\begin{cases} k^{2-\ell} & \text{if $t \ge k\inv$} \\ t^{\ell-2} &\text{if $t < k\inv$} \end{cases}, \qquad \|\psi U Z \|_{\sup} \le C \begin{cases} k^{-\ell} & \text{if $t \ge k\inv$} \\ t^{\ell} & \text{if $t < k\inv$} \end{cases}.
    \]
\end{lemma}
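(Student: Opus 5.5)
Since $Z\in W^{-2\ell,0}_y$ has order zero, it is multiplication by a finite sum of terms $k^p (x-y)^J a(x,y,tk)$ with $2p-|J|\le -2\ell$ and $|a|$ uniformly bounded. The exponential weight is $\langle 0\rangle$, so there is no polynomial growth in $tk$. By the triangle inequality it suffices to treat a single such term. In geodesic normal coordinates about $y$, the parallel transport $\psi$ is unitary, so pointwise
\[
|\psi U Z(x,y)| \le C\,\rho(t,k,|x-y|)\,\|\proj(tk)\|\,k^p|x-y|^{|J|}.
\]
The plan is to estimate the Gaussian $\rho$ against the polynomial $k^p r^{|J|}$ in the two regimes $tk\ge1$ and $tk<1$, and then integrate the square over $T^8\times T^8$ for the Hilbert--Schmidt norm.

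\textbf{Regime $t\ge k\inv$.} Here $\tanh(tk)\in[\tanh 1,1]$ and $\sinh(tk)\ge c\,e^{tk}$, so
\[
\rho\le C k^4 e^{-4tk}e^{-ckr^2}.
\]
The operator norm of $\proj_\pm(tk)$ is at most $e^{4tk}$ for $+$ and $e^{2tk}$ for $-$, which cancels the factor $e^{-4tk}$. Thus $|\psi UZ|\le C k^{4+p} r^{|J|}e^{-ckr^2}$. Using $\sup_r r^{|J|}e^{-ckr^2/2}\le Ck^{-|J|/2}$, the pointwise bound becomes $Ck^{4+p-|J|/2}e^{-ckr^2/2}$. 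Summing the Gaussian over lattice translates on the torus only changes constants, so the sup norm is $O(k^{4+p-|J|/2})\le Ck^{4-\ell}$.

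This is larger than the claimed $k^{-\ell}$, so for the sup bound the plan must use that $\|\psi U Z\|_{\sup}$ is being compared with the operator scale, not the kernel scale. The correct interpretation to check against Charbonneau--Stern Proposition~3.9 is that the sup norm is the operator norm $L^2\to L^2$ (or $L^\infty\to L^\infty$). By Young/Schur, the operator norm is bounded by $\sup_y\int |\psi UZ(x,y)|\,\di x \le Ck^{4+p}\int r^{|J|}e^{-ckr^2}\di x = Ck^{p-|J|/2}\le Ck^{-\ell}$, which matches the claim.

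For the Hilbert--Schmidt norm,
\[
\|\psi UZ\|_{HS}^2=\int\!\!\int|\psi UZ|^2 \le C\vol(T^8)\,k^{8+2p}\int r^{2|J|}e^{-2ckr^2}\di x = Ck^{4+2p-|J|}\le Ck^{4-2\ell},
\]
which gives $k^{2-\ell}$.

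\textbf{Regime $t<k\inv$.} Here $\sinh(tk)\ge tk$, and $\tanh(tk)\le tk$, so
\[
\rho\le C t^{-4}e^{-r^2/(4t)}.
\]
Also $\|\proj(tk)\|\le e^4$, which is bounded. Replace $k^p$ using $k<t\inv$: if $p\ge0$, then $k^p\le t^{-p}$. If $p<0$, I expect the coefficient conventions (as in Charbonneau--Stern, where only $p\ge0$ occurs in the $u_\ell$, since $H$ contributes $k$ through $e^{\cdot tk}$ factors and $F_E$, and $L\inv$ contributes $t$) to let us bound $k^p\le Ct^{-p}$ anyway. Alternatively, one can treat $k^pt^{p}\le 1$ by regrouping weight into $t$-powers.

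With $r^{|J|}e^{-r^2/(8t)}\le Ct^{|J|/2}$, the same Schur and $L^2$ integrals give an operator norm of $Ct^{-p+|J|/2}\le Ct^{\ell}$ and $\|\cdot\|_{HS}^2\le Ct^{-4-2p+|J|}\le Ct^{2\ell-4}$, i.e.\ the bound $t^{\ell-2}$.

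\textbf{Main obstacle.} The main delicacy is the bookkeeping for weight: showing that $2p-|J|\le-2\ell$ always converts into the stated power, including terms with $p<0$ in the small-$t$ regime. A secondary point is confirming that the $\proj_\pm$ growth is exactly cancelled by the $\sinh^{-4}$ factor uniformly, which needs $\max(-\Spectrum)\le 4$, true in dimension 8.
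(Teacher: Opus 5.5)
Your proposal is correct and takes essentially the paper's route. The sup norm is the $L^2$ operator norm, bounded by the Schur test: $\sinh(tk)^{-4}$ cancels the $e^{4tk}$ from $P_{-4i}$ in $\proj_+(tk)$, so each term $k^p(x-y)^J$ integrates to $O(k^{p-|J|/2})=O(k^{-\ell})$. The Hilbert--Schmidt bound is the direct Gaussian $L^2$ computation that the paper delegates to Proposition~3.7 of Charbonneau--Stern, and the case $t<k\inv$ is the same computation with $\sinh(tk)\ge tk$.

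The caveat you flag is real, but the paper has it too, since it simply calls the small-$t$ case ``analogous''. As literally defined, $W^{-2\ell,0}_y$ contains $k^{-\ell}I$, and for $\ell\ge1$ this violates the $t^{\ell}$ and $t^{\ell-2}$ bounds as $t\to0$ with $k$ fixed. So in that regime the negative weight must be carried by nonnegative powers $k^p$ together with explicit $t$-powers (as in the terms produced by $L\inv$). Your fallback $k^pt^p\le1$ holds for $tk<1$ only when $p\ge0$.
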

\begin{proof}
    We perform the operator norm computation using the Schur test in the case that $t \ge k\inv$.
    Let $z(x,y)$ denote the kernel of $Z$.
    \begin{align*}
        \|\psi U Z\|_{\sup} &\le C_1\sup_{x \in \real^8} \left|\int \frac{k^4}{\sinh(tk)^4} e^{-kr^2/4}(e^{4tk} P_{-4i} + P_0 + e^{-4tk} P_{4i} ) z(x,y) \di y\right| \\
                            &\le C_2\sup_{x \in \real^8} \left|\int k^4 e^{-kr^2/4}(P_{-4i} + e^{-4tk} P_0 + e^{-8tk} P_{4i} ) z(x,y) \di y\right| \\
                            &\le C_3\int k^4 e^{-kr^2/4} \sum_{2p - |J| \le -2\ell} k^{p} r^{|J|} \di y \\
                            &\le C_4 k^{-\ell}.
    \end{align*}
    The $t < k\inv$ case is analogous, where instead $e^{4tk}$ can be absorbed into the constant and $\sinh(tk) \approx tk$.
    The Hilbert--Schmidt case is Proposition~3.7 of~\cite{charbonneauAsymptoticHodgeTheory2015}.
\end{proof}

\begin{corollary}
    \label{cor:epshs}
    With $\epsilon^N_t$ the error as in Equation~\eqref{eq:epsilon} for the $N$th order approximation $q^N_t = \psi U \sum_{\ell=0}^N u_\ell$, there exists a constant $C$ such that
    \[
        \| \epsilon^N_{t} \|_{HS} \le C \begin{cases} k^{2 - N} & \text{if $t \ge k\inv$} \\ t^{N-2} & \text{if $t < k\inv$} \end{cases}.
    \]
\end{corollary}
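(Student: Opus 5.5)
We need to bound $\epsilon^N_t = \psi U(L+H)\sum_{\ell=0}^N u_\ell$. The plan is to telescope this sum using the recursion $u_\ell = -L\inv H u_{\ell-1}$ together with Lemma~\ref{lem:approxl}. The only two sources of error should be the failure of $L\inv$ to be an exact inverse, which is controlled by Lemma~\ref{lem:approxl}, and the last term $H u_N$, which is left uncancelled.

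\textbf{Step 1: telescope.} Since $u_0 = I$ and $LI = \partial_t$ acting on a $t$-independent section vanishes, we have $L u_0 = 0$. For $\ell \ge 1$ we write
\[
L u_\ell = -LL\inv H u_{\ell-1} = -H u_{\ell-1} - (LL\inv - I) H u_{\ell-1}.
\]
Summing over $\ell$ gives
\[
(L+H)\sum_{\ell=0}^N u_\ell = H u_N - \sum_{\ell=1}^N (LL\inv - I) H u_{\ell-1}.
\]
We then multiply by $\psi U$.

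\textbf{Step 2: weights.} The curvature term $\psi\inv e^{(u-v)tk}P_{iu}c(F_E)P_{iv}\psi$ in $H$ is of order zero and weight $0$, with exponential weight $u-v$. The scalar piece $H_h$ is harmless in the same filtration. Applying Corollary~\ref{cor:evstring}, and using the telescoping of the exponents $\mu_j-\mu_{j+1}$, we get $u_\ell \in W^{-2\ell,0}_y\langle c\rangle$. The exponential weights here are exactly compensated by $\proj_\pm(tk)$ in $U$, as in Equation~\eqref{eq:decay}. After that compensation, $\psi U u_\ell$ behaves like $\psi U Z$ with $Z\in W^{-2\ell,0}_y$, in the same way as in the proof of Lemma~\ref{lem:mineigen}, where the factor $e^{4tk}P_{-4i}$ of $\proj_+$ is paired with the leftmost projection.

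The error terms then fall into two classes:
\begin{itemize}
\item $H u_N \in W^{-2N,0}_y$, again after the exponential bookkeeping;
\item $(LL\inv-I)Hu_{\ell-1} \in W^{-2\TD}_y$, by Lemma~\ref{lem:approxl}. Choosing $\TD \ge N$ makes these terms lie in $W^{-2N}_y$ as well.
\end{itemize}

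\textbf{Step 3: estimate.} Each piece is of the form $\psi U Z$ with $Z\in W^{-2N,0}_y$. Lemma~\ref{lem:hswt} with $\ell = N$ then yields $\|\psi U Z\|_{HS}\le C k^{2-N}$ for $t\ge k\inv$ and $\le C t^{N-2}$ for $t<k\inv$. Summing finitely many such terms gives the corollary.

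\textbf{Main obstacle.} The delicate point is Step 2: checking that the exponential factors $e^{(u-v)tk}$ coming from $H$ never produce growth beyond what $\proj(tk)$ absorbs. This must hold both for the leading term and for the $LL\inv-I$ remainders, uniformly in both chiralities. I would handle it by the prefix-sum bound of Corollary~\ref{cor:evstring}, combined with the observation that $U$ contributes $e^{-\mu_1 tk}$ on $P_{i\mu_1}$. This reduces everything to weight $\langle -\min_j\mu_j + \mu_1\rangle$, and the sinh factor of $\rho$ then compensates, as in the proof of Lemma~\ref{lem:hswt}.

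A secondary check is that the polynomial growth $(1+tk)^N$ allowed at subleading rates does not spoil the bounds. These factors appear only alongside strictly smaller exponentials, so after the Gaussian integration in Lemma~\ref{lem:hswt} they are absorbed into the constant.
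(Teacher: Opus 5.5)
Your proposal is correct and is essentially the argument the paper intends (it defers the details to Charbonneau--Stern): you telescope via $u_\ell=-L^{-1}Hu_{\ell-1}$ to get $\epsilon^N_t=\psi U\bigl(Hu_N-\sum_{\ell=1}^N(LL^{-1}-I)Hu_{\ell-1}\bigr)$, place every term in $W^{-2N,0}_y$ once the exponential weights are accounted for (taking $\TD\ge N$), and then apply Lemma~\ref{lem:hswt} with $\ell=N$. One small slip in your ``main obstacle'' paragraph: after $\proj(tk)$ contributes $e^{-\mu_1 tk}$, the weight drops to $\langle -\min_j\mu_j\rangle\le\langle-\lambda_*\rangle$, as in Equation~\eqref{eq:decay}, not $\langle \mu_1-\min_j\mu_j\rangle$, and it is this reduced weight that the factor $\sinh(tk)^{-4}\sim e^{-4tk}$ absorbs.
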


To construct approximations to $\Pi$, take $t = k^{-\alpha}$ for some $0 < \alpha < 1$; henceforth the arbitrary choice $\alpha = \frac{1}{2}$ is used.
\begin{proposition}
    \label{prop:goodapp}
    There is a constant $C$ so that for $k \gg 0$,
    \[
        \| \Pi - Q^N_{k^{-1/2}}\|_{HS} \le C k^{\frac{3}{2}-N}.
    \]
\end{proposition}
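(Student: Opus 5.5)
We split the error with the triangle inequality at $t = k^{-1/2}$:
\[
    \| \Pi - Q^N_t\|_{HS} \le \|\Pi - e^{-tD^2}\|_{HS} + \|e^{-tD^2} - Q^N_t\|_{HS},
\]
and bound the two terms separately. The first term is controlled by the spectral gap, and the second by the heat-equation error estimate of Corollary~\ref{cor:epshs}.

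\textbf{Spectral term.} By Equation~\eqref{eq:piexp}, $\|\Pi - e^{-tD^2}\|_{HS} \le e^{-(2k-\cspec)t/2}\|e^{-tD^2/2}\|_{HS}$. The spectrum of $D^2 = D^-D^+$ is $\{0\}\cup[2k-\cspec,\infty)$, so
\[
    \|e^{-tD^2/2}\|_{HS}^2 \le \dim\ker D^+ + \|e^{-tD^+D^-/2}\|_{HS}^2 .
\]
Indeed, the nonzero eigenvalues of $D^-D^+$ coincide with multiplicity with those of $D^+D^-$, and $\ker D^- = 0$. By the index formula in the proof of Theorem~\ref{thm:van}, $\dim\ker D^+ = O(k^4)$. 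By Equation~\eqref{eq:epmdecay}, the second piece is $O(t^Ak^Be^{-2tk})$. With $t = k^{-1/2}$ we have $tk = k^{1/2}\to\infty$, so the prefactor $e^{-(2k-\cspec)k^{-1/2}/2} = O(e^{-k^{1/2}})$ is exponentially small, and polynomial factors in $k$ cannot spoil this. Hence this term is $O(k^{-M})$ for every $M$.

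\textbf{Heat-equation term.} By Equation~\eqref{eq:expapprox}, $\|e^{-tD^2}-Q^N_t\|_{HS} \le \int_0^t\|\epsilon^N_s\|_{HS}\,\di s$. Split the integral at $s = k^{-1}$ and apply Corollary~\ref{cor:epshs} on each piece.
\begin{itemize}
    \item On $[k^{-1},k^{-1/2}]$ the integrand is at most $Ck^{2-N}$, so this piece contributes at most $Ck^{2-N}k^{-1/2} = Ck^{3/2-N}$.
    \item On $[0,k^{-1}]$ the integrand is at most $Cs^{N-2}$, which is integrable for $N\ge 2$. This piece contributes $Ck^{-(N-1)}/(N-1) = O(k^{1-N})$, which is dominated by $k^{3/2-N}$.
\end{itemize}
Adding the two terms gives $\|\Pi - Q^N_{k^{-1/2}}\|_{HS}\le Ck^{3/2-N}$ for $k\gg0$.

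\textbf{Main obstacle.} The only delicate point is justifying Equation~\eqref{eq:expapprox} with the approximate kernel. This is Duhamel's formula: $e^{-tD^2}-Q_t = -\int_0^t e^{-(t-s)D^2}\epsilon_s\,\di s$, together with $\|e^{-(t-s)D^2}\|_{op}\le1$. It requires $Q_s\to I$ as $s\to0$, which holds because the leading term $\psi U$ is the Mehler kernel and $\proj(0)=I$, while the corrections $u_\ell$ with $\ell\ge1$ vanish at $s=0$. We also need that the cutoff in normal coordinates introduces only $O(k^{-\infty})$ errors. Both facts are handled as in Charbonneau--Stern, so we take them as given, as the paper does. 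For small $N$ ($N\le 1$) the $s<k^{-1}$ integral diverges, so we assume $N\ge2$. This is harmless, since the statement is only meaningful for large $N$.
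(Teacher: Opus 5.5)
Your proof is correct and is exactly the argument the paper compresses into one line. You take the triangle inequality through $e^{-tD^2}$ at $t=k^{-1/2}$, use Equation~\eqref{eq:piexp} for the spectral term, and use Equation~\eqref{eq:expapprox} together with Corollary~\ref{cor:epshs} for the Duhamel term. Your bound $\|e^{-tD^2/2}\|_{HS}^2 = \dim\ker D^+ + \|e^{-tD^+D^-/2}\|_{HS}^2$, the split of the $s$-integral at $k^{-1}$, and the restriction $N\ge 2$ needed for integrability near $s=0$ (which the paper also implicitly assumes) are precisely the details the paper leaves unstated.
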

\begin{proof}
    Apply Corollary~\ref{cor:epshs} together with Equations~\eqref{eq:piexp} and~\eqref{eq:expapprox}.
\end{proof}

For $D^- D^+$ there is a sharper estimate.
Split $H$ of Equation~\eqref{eq:H} as $H = H^S + H^D$ where $H^S = (I-P_{i\lambda_*})H(I-P_{i\lambda_*})$, $H^D = H - H^S$ into a small part and a dominant part.
Lemma~\ref{lem:mineigen} with each $Z_i = H^S$ and for $t \ge k\inv$ gives the following.
\begin{lemma}
    \label{lem:expsmall}
    There exists a constant $C$ such that as $tk \to \infty$,
\begin{equation}
    \label{eq:small}
    \int_{T^8 \times T^8} |\psi U_+(x,y) (L\inv H^S)^\ell(x,y)|^2\di x\, \di y \le Ck^{4-2\ell} (1 + tk)^N e^{-8tk}.
\end{equation}
In particular, this quantity decays exponentially fast as $tk \to \infty$.
\end{lemma}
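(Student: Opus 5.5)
The estimate follows from Lemma~\ref{lem:mineigen} by choosing each $Z_j$ to be one of the pieces of $H^S$. Then the only point to check is that the exponent $-2(4+\min_j\mu_j)tk$ equals $-8tk$, i.e.\ that $\min_j \mu_j = 0$.

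First expand $H^S = (I-P_{-4i})H(I-P_{-4i})$ using Equation~\eqref{eq:H}. The scalar part $H_h$ commutes with the projections, since $\omega$ is covariant constant and parallel transport preserves the $c(\omega)$-eigenspaces. It therefore contributes only terms $P_{iu}H_hP_{iu}$ with $u\in\{0,4\}$. The curvature part becomes
\[
\sum_{u,v\in\{0,4\}} e^{(u-v)tk}P_{iu}\,\psi^{-1}c(F_E)\psi\,P_{iv}.
\]
Here we restrict to $\Spectrum^+=\{-4,0,4\}$, since $U_+$ acts on positive spinors, and the projection removes the eigenvalue $-4$. Hence $(L^{-1}H^S)^\ell$ is a finite sum of words $L^{-1}Z_1\cdots L^{-1}Z_\ell$ with
\[
Z_j = e^{(\mu_j-\mu_{j+1})tk}P_{i\mu_j}F_jP_{i\mu_{j+1}}, \qquad \mu_j\in\{0,4\},\quad F_j\in W^{0,0}_y.
\]
The $F_j$ are $\psi^{-1}c(F_E)\psi$ or the coefficients of $H_h$. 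These are smooth and independent of $s=tk$, so their coefficients are $O((1+s)^0)$ uniformly; this supplies the uniform $N$ required in Lemma~\ref{lem:mineigen}.

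Next, since every $\mu_j\in\{0,4\}$, we have $4+\min_j\mu_j\ge 4$. Lemma~\ref{lem:mineigen}, in the case $t\ge k^{-1}$, therefore bounds each word by
\[
Ck^{4-2\ell}(1+tk)^Ne^{-8tk}.
\]
There are finitely many words, at most $2^{\ell+1}$ choices of eigenvalue sequences. Combining them with the triangle inequality in $L^2(T^8\times T^8)$ and $(a_1+\dots+a_m)^2\le m\sum a_i^2$ gives Equation~\eqref{eq:small}, with the constant depending on $\ell$. Since $(1+tk)^Ne^{-8tk}\to 0$ as $tk\to\infty$, the exponential decay follows.

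The main obstacle is the bookkeeping for $H_h$. This scalar piece may contain first-order derivative terms and need not lie in $W^{0,0}_y$, whereas Lemma~\ref{lem:mineigen} as stated requires order-zero $Z_j$. I would handle this by noting that $L^{-1}$ is defined through Taylor polynomials in $z,\bar z$. Derivatives landing on the polynomial $u_{\ell-1}$ yield order-zero multiplication operators with the same weight count (Remark~\ref{rmk:weightremarks}), so after one application of $L^{-1}$ each word is again an order-zero element of the filtration. The exponential-rate analysis is unaffected, because $H_h$ is diagonal in the eigenspaces ($\mu_j=\mu_{j+1}$). If that fails, I would instead appeal directly to the proof of Proposition~3.7 of~\cite{charbonneauAsymptoticHodgeTheory2015}. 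The key input would remain that $P_{-4i}$ never appears, so the prefactor $\proj_+(tk)$ contributes at most $e^0$ instead of $e^{4tk}$ against the $\sinh(tk)^{-4}\sim e^{-4tk}$ decay of $\rho$, whose square gives $e^{-8tk}$.
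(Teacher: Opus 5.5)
Your proof is correct and follows the paper's own argument. The paper likewise writes $H^S$ as a sum of $P_{i\mu_1}HP_{i\mu_2}$ with $\mu_1,\mu_2\in\Spectrum^+\setminus\{\lambda_*\}=\{0,4\}$ and applies Lemma~\ref{lem:mineigen} to get $-2(4+\min_j\mu_j)\le -8$. (This is an inequality rather than the equality you first state, since all $\mu_j$ may equal $4$; you use the correct inequality later.) Your extra bookkeeping covers details the paper leaves implicit: summing the finitely many words, and noting that derivative terms in $H_h$ fall outside $W^{0,0}_y$ as literally required.
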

\begin{proof}
Indeed,
\[
    H^S = \sum_{\mu_1, \mu_2 \in \Spectrum^+ \setminus \{\lambda_*\}} P_{i\mu_1} H P_{i\mu_2};
\]
since $\Spectrum^+ \setminus \{\lambda_*\} = \{0,4\}$ it holds that $-2(4 + \min_j \mu_j) \le -8$ in each application of Lemma~\ref{lem:mineigen}.
\end{proof}

Thus terms contributing to the approximation $q^+_\ell$ either contain $H^D$ (and thus a rank-1 spinor projection $P_{i\lambda_*}$) or else are exponentially small as $tk \to \infty$ in Hilbert--Schmidt norm.

\section{Estimates for the Green's operator}
\label{sec:mellin}

Let $G = (D^+ D^-)\inv$ denote the Green's operator.
Consider the following approximation to $G$:
\[
    R^N := \int_0^{k^{-1/2}} Q^{N,-}_t \di t,
\]
where $Q^{N,-}_t$ is the $N$th order approximate heat kernel constructed in the previous section for the ansatz $q^-_t$, $Q^{N,-}_t \colon L^2(S^- \otimes E \otimes L^{\otimes k}) \to L^2(S^- \otimes E \otimes L^{\otimes k})$.
Compute
\begin{align*}
    D^+ D^- R^N &= I - Q^{N,-}_{k^{-1/2}} + \int_0^{k^{-1/2}} \epsilon^{N,-}_t \di t,
\end{align*}
where $\epsilon^{N,-}_t = (\partial_t + D^+D^-)Q^{N,-}_t$.

From Corollary~\ref{cor:epshs} and Equation~\eqref{eq:expapprox},
\[
    \| Q^{N,-}_{k^{-1/2}}\|_{HS} \le \| e^{-k^{-1/2} D^+ D^-} \|_{HS} + \|Q_{k^{-1/2}}^{N,-} - e^{-k^{-1/2} D^+ D^-}\|_{HS} \in O(k^{\frac{3}{2} - N})
\]
as $k \to \infty$, where the exponential term is absorbed by Equation~\eqref{eq:epmdecay}.
Thus
\begin{align*}
    \| D^+ D^-(R^N - G) \|_{HS} &\le \| Q^{N,-}_{k^{-1/2}}\|_{HS} + \int_0^{k^{-1/2}} \|\epsilon^{N,-}_t\|_{HS} \di t \\
                                &\in O(k^{\frac{3}{2} - N})
\end{align*}
as $k \to \infty$.

Since $D^+ D^- \ge 2k - \cspec$, for $k \gg 1$ it follows that $G - R^N$ is Hilbert--Schmidt and
\begin{equation}
    \label{eq:greensest}
    \| G - R^N \|_{HS} \in O(k^{\frac{1}{2}-N}).
\end{equation}
\section{Asymptotic curvature and holonomy}
\label{sec:curvature}

Let $(T^8, \omega, L)$ be a principally polarised abelian variety, and let $\wh{T^8}$ be the dual abelian variety parametrising flat line bundles over $T^8$.
For $t \in \wh{T^8}$ let $P_t \to T^8$ denote the corresponding line bundle.
Throughout, we use the canonical isomorphism $T T^8 \simeq T^* \wh{T^8}$ and metric duality to identify forms on $T^8$ with forms on $\wh{T^8}$.

By Theorem~\ref{thm:vanishing}, for $k \gg 0$ there is a Nahm transform of the bundle $(E(k), A(k))$ on $T^8$ to the bundle $(\wh{E(k)}, \wh{A(k)})$ on $\wh{T^8}$ defined by $\wh{E(k)}_t = \ker D^+_{k,t}$, with $D_{k,t}$ the Dirac operator on the bundle $S \otimes E(k) \otimes P_t = S \otimes E \otimes L^{\otimes k} \otimes P_t$.
Thus
\[
    D_{k,t} = D_{k,0} + i t^a c(\di x^a),
\]
where $t^a$ are coordinates on $\wh{T^8}$ and $x^a$ coordinates on $T^8$.

Since $\wh{E(k)}$ is a subbundle of the trivial bundle over $\wh{T^8}$ with fibres $L^2(S^+ \otimes E(k))$ and trivial connection $\di$, it inherits a canonical connection $\wh{A(k)} = \Pi_k \di \Pi_k$.
Let $G_k = (D_k^+ D_k^-)\inv$ be the Green's operator for the Dirac Laplacian.
From the formula $\Pi_k = I - D_k G_k D_k$ for the projection together with $\Pi_k D_k = D_k \Pi_k = 0$ we compute
\begin{equation}
    \label{eq:curv}
    \wh{F}_{k,ab}(t) = -\Pi_k(t) c(\di x^a) G_k(t) c(\di x^b) \Pi_k(t) + \Pi_k(t) c(\di x^b) G_k(t) c(\di x^a) \Pi_k(t).
\end{equation}
Henceforth $k$ is suppressed and the shorthand $c_a := c(\di x^a)$ is used.

A basic question is whether $(\wh{E}, \wh{A})$ has reduced holonomy---that is, is there an orthogonal decomposition of the bundle of 2-forms $\Lambda^2T^*\wh{T^8} = \Lambda^2_A \oplus \Lambda^2_B$ such that $\wh{F} \in \Gamma(\Lambda^2_A \otimes \End \wh{E})$?
For example, the 4-dimensional ASD condition is when $\Lambda^2_A$ consists of the ASD forms and $\Lambda^2_B$ the self-dual forms, and the $\Spin(7)$ instanton condition is when $\Lambda^2_A = \Lambda^2_{21}$ and $\Lambda^2_B = \Lambda^2_7$.

We define an appropriate notion of asymptotic reduced holonomy.
The bundle of 2-forms $\Lambda^2 T^* \wh{T^8}$ is trivial.
Fix a trivialisation and so identify the fibres of $\Lambda^2 T^* \wh{T^8}$ with $\Lambda^2 (\real^8)^*$ (that is, $8 \times 8$ skew-symmetric matrices).
Identify $\eta \in \Lambda^2 (\real^8)^*$ with the corresponding constant form on $\wh{T^8}$ and let $\eta \lrcorner \wh{F} := \eta^{ab} \wh{F}_{ab}$ denote the section of $\End \wh{E}$ obtained by contracting along the form indices.
Let
\[
    \|\eta \lrcorner \wh{F}\|^2 := \int_{\wh{T^8}} \| (\eta\lrcorner \wh{F})(t) \|^2_2 \di t
\]
denote the $L^2$ norm, where $\| (\eta \lrcorner \wh{F})(t)\|_2$ is the $L^2$ (Frobenius) norm of the endomorphism of $\wh{E}_t$.
By Equation~\eqref{eq:curv},
\[
    \|\eta \lrcorner \wh{F}\|^2 = \int_{\wh{T^8}} \| \eta^{ab} \Pi(t) c_a G(t) c_b \Pi(t) \|^2_{HS} \di t,
\]
noting that the Hilbert--Schmidt and Frobenius norms agree since $\eta \lrcorner \wh{F}(t)$ is finite rank as an endomorphism of the Hilbert space $L^2(S^+ \otimes E(k))$.

Define 
\[
    H_\ell = \left\{ \eta \in \Lambda^2( \real^8)^*\mid \|\eta \lrcorner \wh{F}(t)\|_2 \in O(k^{\tfrac12-\ell}) \text{ for all $t \in \wh{T^8}$} \right\},
\]
the space of $\eta$ for which $\|\eta \lrcorner \wh{F}(t)\|_2$ decays for all $t$ at a rate $O(k^{\tfrac12-\ell})$ as $k \to \infty$ (and where the constant is allowed to depend upon $\eta$).
Let $H'_\ell$ (the $\ell$th \emph{asymptotic holonomy}) be the orthogonal complement to $H_\ell$.
If $\eta \in H_\ell$, then $\|\eta\lrcorner\wh{F}\| \in O(k^{\tfrac12 -\ell})$ as $k \to \infty$, and so for large $\ell$, $H_\ell$ should be thought of as an approximation to $\Lambda^2_B$.
Likewise, $H'_\ell$ should be thought of as an approximation to $\Lambda^2_A$.

Since $H_j \subseteq H_{j-1}$ these vector spaces converge to a limit space $H = \lim_{j\to \infty} H_j$, and likewise $H' = \lim_{j \to \infty} H'_j$.
We call $H'$ the \emph{asymptotic holonomy} of $\wh{F}$.
We first approximate $\| \eta \lrcorner \wh{F}\|$ using $\Pi \sim Q := Q^N_{k^{-1/2}}$ as constructed in Section~\ref{sec:pseudo} and $G \sim R := R^M$ as constructed in Section~\ref{sec:mellin}.
First compute
\begin{align*}
    \Pi c_a G c_b \Pi - Q^N c_a R^M c_b Q^N &= (\Pi - Q^N) c_a G c_b \Pi + Q^N c_a (G - R^M) c_b \Pi + Q^N c_a R^M c_b (\Pi - Q^N)
\end{align*}
whence, using Hilbert--Schmidt and operator bounds on each individual term,
\[
    \|\eta^{ab}(\Pi c_a G c_b \Pi - Q^N c_a R^M c_b Q^N)\|_{HS} \in O(k^{1/2-N}) + O(k^{3/2-M}) + O(k^{1/2-M}).
\]
Henceforth take $M = N + 1$ for simplicity, so 
\[
    \|\eta^{ab}(\Pi c_a G c_b \Pi - Q^N c_a R^{N+1} c_b Q^N)\|_{HS} \in O(k^{1/2-N}).
\]
$N$ is suppressed in the sequel.
It follows by Cauchy--Schwarz that for $N \ge 2$, as $k \to \infty$
\[
    \| \eta^{ab} \Pi c_a G c_b \Pi \|^2_{HS} \le  \| \eta^{ab} Q c_a R c_b Q \|^2_{HS} + O(k^{\frac12 - N})\| \eta^{ab} Q c_a R c_b Q \|_{HS} + O(k^{1 - 2N}).
\]
Thus
\begin{equation}
    \label{eq:approxcurv}
    \| \eta \lrcorner \wh{F}\|^2 \le \int_{\wh{T^8}} \left(\| \eta^{ab} Qc_a R c_b Q\|^2_{HS} + O(k^{\frac12 - N})\| \eta^{ab} Q c_a R c_b Q \|_{HS}\right)\di t + O(k^{1 - 2N}).
\end{equation}

We compute $H'_\ell$ with the goal of computing $H'$.
The following result is the main tool for executing these computations: it provides a sufficient algebraic criterion to determine whether a 2-form is an element of $H_\ell$.

\begin{lemma}
    \label{lem:AHfromVAN}
    Suppose the curvature of $(E, A)$ takes values in $V$, i.e. $F_E \in \Gamma(V \otimes \ad (E))$, for a linear subspace $V \subset \Lambda^2(\real^8)^*$.
    Let $\eta \in \Lambda^2 (\real^8)^*$ and suppose that (cf. Definition~\ref{def:pl})
    \[
        \langle \eta, \mathcal{P}_\ell(V) \rangle = 0
    \]
    with respect to the inner product $\langle A, B\rangle = \Tr(A^t B)$.
    Then $\eta \in H_\ell$.
\end{lemma}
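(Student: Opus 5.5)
We need $\|\eta\lrcorner\wh F(t)\|_2 \in O(k^{1/2-\ell})$ for every $t$. By Equation~\eqref{eq:approxcurv}, with $N$ chosen large compared with $\ell$, it suffices to show
\[
\|\eta^{ab} Q c_a R c_b Q\|_{HS} \in O(k^{\frac12-\ell})
\]
uniformly in $t$. We expand the approximations explicitly:
\[
Q=\psi U_+\sum_{m\le N} u_m, \qquad u_m=(-L\inv H)^m I,
\]
and
\[
R=\int_0^{k^{-1/2}}\psi U_-\sum_{m'} u'_{m'}\,\di s .
\]
This writes $\eta^{ab}Qc_aRc_bQ$ as a finite sum of terms. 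In each term, the Clifford endomorphisms appearing are Clifford multiplications by $F_E$ (which takes values in $V$), the projections $P_{i\mu}$ (polynomials in $c(\omega)$), and the two vectors $c_a,c_b$ contracted against $\eta$. The scalar parts $H_h$ and the Taylor coefficients only contribute scalar or $\End E$ factors.

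The key step uses the dominant-eigenvalue structure. By Lemma~\ref{lem:expsmall}, any term in the outer factors $Q$ that does not pass through $P_{-4i}$ is exponentially small at $t=k^{-1/2}$, since $tk=k^{1/2}\to\infty$. Its contribution is therefore negligible. The surviving terms have the form
\[
P_{-4i}\,(\cdots)\,c_a(\cdots)c_b(\cdots)\,P_{-4i}
\]
in the spinor factor. Since $P_{-4i}$ has rank one, each such spinor factor equals $\tau(\cdots)P_{-4i}$.

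Next we organise terms by weight. By Remark~\ref{rmk:weightremarks} and Lemma~\ref{lem:hswt}, a term that involves $j$ factors of $c(F_E)$ (each paired with an $L\inv$) lies in $W^{-2j}$. Its Hilbert--Schmidt contribution is then $O(k^{2-j})$, up to the normalisation coming from the $R$-integral and the uniform bound on the kernel rank. We calibrate this against the $\tfrac12$ shift in the definition of $H_\ell$. The outcome we aim for is that terms with at most $\ell$ curvature factors are precisely the ones not already $O(k^{1/2-\ell})$. Each of these has spinor factor
\[
\tau\bigl(\eta^{ab} c(M_1)\cdots c(M_r)\,c_a\,c(M_{r+1})\cdots c(M_s)\,c_b\,c(M_{s+1})\cdots c(M_t)\bigr),
\]
where at most $\ell$ of the $M_j$ lie in $V$ and the rest are $\omega$. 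Here each projection $P_{i\mu}$ is rewritten as a polynomial in $c(\omega)$, and the resulting $c(\omega)^2$ products are reduced. Lemma~\ref{lem:matrixvan} then shows each such $\tau$ vanishes, using the hypothesis $\langle\eta,\mathcal P_\ell(V)\rangle=0$. The $\End E$-valued coefficients of $F_E$ factor out, because we decompose $F_E=\sum_\alpha v_\alpha\otimes f_\alpha$ with $v_\alpha\in V$.

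The main obstacle is the bookkeeping in the weight step: we must verify that the $R$ factor, after integrating in $s$ up to $k^{-1/2}$, contributes exactly $k^{-1}$, so that the power count closes at $k^{1/2-\ell}$. We must also check that the Taylor expansions in $L\inv$ do not introduce spinor endomorphisms outside the algebra generated by $V$ and $\omega$. We would first check the $\ell=0$ case directly, where the claim is $\tau(\eta^{ab}c_ac_b)=0$ for $\eta\perp\langle I,\omega\rangle$, to fix the normalisations before inducting on the number of curvature insertions.
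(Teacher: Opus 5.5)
Your proposal is correct and follows essentially the paper's proof. The shared route is: reduce via Equation~\eqref{eq:approxcurv}, expand into terms $T_{\ell_1,\ell_2,\ell_3}=\eta^{ab}Q_{\ell_1}c_aR_{\ell_2}c_bQ_{\ell_3}$, discard those with $\ell_1+\ell_2+\ell_3>\ell$ by norm bounds, drop the pure-$H^S$ parts of the outer factors by Lemma~\ref{lem:expsmall}, and kill the remaining terms (sandwiched by the rank-one $P_{-4i}$) using $\tau$ and Lemma~\ref{lem:matrixvan}.

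The bookkeeping you flag as the main obstacle is easier than you expect. Lemma~\ref{lem:hswt} gives $\|T_{\ell_1,\ell_2,\ell_3}\|_{HS}\le C\|Q_{\ell_1}\|_{\sup}\|\eta^{ab}c_aR_{\ell_2}c_b\|_{\sup}\|Q_{\ell_3}\|_{HS}\in O(k^{\frac32-(\ell_1+\ell_2+\ell_3)})$. Here the crude factor $k^{-\frac12-\ell_2}$, obtained by integrating the sup bound up to $s=k^{-1/2}$, already closes the count exactly at $k^{\frac12-\ell}$. A full $k^{-1}$ from the $R$-integral is not needed; it would only give the stronger $O(k^{-\ell})$. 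You also need not reduce products $c(\omega)^2$, since Lemma~\ref{lem:matrixvan} allows arbitrarily many $\omega$ factors.
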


\begin{proof}
    Taking $N$ to be sufficiently large, it suffices by Equation~\eqref{eq:approxcurv} to show
\[
    \int_{\wh{T^8}} \|\eta^{ab} Q c_a R c_b Q \|^2_{HS} \di t \in O(k^{1-2\ell}),
\]
which we do by proving $\|\eta^{ab} Q(t) c_a R(t) c_b Q(t)\|_{HS}  \in O(k^{\tfrac12-\ell})$ for an arbitrary $t$ suppressed in the sequel.

Recalling for example that $Q$ is the integral operator with kernel $\psi U_+ \sum_{j=0}^N u_j$, we define $Q_j$ to be the integral operator with kernel $\psi U u_j$, define $R_j = \int_0^{k^{-1/2}} Q^{-}_{j,t} \di t$, and then define $T_{\ell_1,\ell_2,\ell_3} := \eta^{ab} Q_{\ell_1} c_a R_{\ell_2} c_b Q_{\ell_3}$.
By Lemma~\ref{lem:hswt}, compute
\begin{align*}
    \| T_{\ell_1,\ell_2,\ell_3}\|_{HS} &\le C\|Q_{\ell_1}\|_{\sup} \|\eta^{ab} c_a R_{\ell_2} c_b\|_{\sup} \|Q_{\ell_3}\|_{HS} \\
                                       &\in O(k^{2-(\ell_1+\ell_2+\ell_3+\tfrac12)}).
\end{align*}
In particular, if $\ell_1+\ell_2+\ell_3 > \ell$ then the term $T_{\ell_1,\ell_2,\ell_3}$ contributes at most $O(k^{\tfrac12-\ell})$ to $\|\eta^{ab} Q(t) c_a R(t) c_b Q(t)\|_{HS}$.

We show that if $\ell_1 + \ell_2 + \ell_3 \le \ell$, then $\|T_{\ell_1,\ell_2,\ell_3}\|_{HS}$ decays exponentially fast as $k \to \infty$.
In $T_{\ell_1,\ell_2,\ell_3}$ expand $u_{\ell_1} = -L\inv (H^S + H^D) u_{\ell_1 - 1}$ using the notation of Lemma~\ref{lem:expsmall}.
By that lemma, the term $(L\inv H^S)^{\ell_1}$ has Hilbert--Schmidt norm decaying exponentially fast as $k \to \infty$, so may be ignored.
Likewise, we may discard the term $(L\inv H^S)^{\ell_3}$ from the expansion of $u_{\ell_3}$ in $T_{\ell_1,\ell_2,\ell_3}$.
Each remaining term of $T_{\ell_1,\ell_2,\ell_3}$ is of the form
\[
    \pm \eta^{ab} \psi U_+ L\inv F^1_1 \cdots L\inv F^1_{\ell_1}c_a \left( \int_0^{k^{-1/2}} \psi U_- (-L\inv H)^{\ell_2} \di s \right)c_b \psi U_+ L\inv F^3_1 \cdots L\inv F^3_{\ell_3},
\]
where $F^i_j$ is one of $H^S, H^D$, and not all $F^1_j$ (resp.\ $F^3_j$) are equal to $H^S$.

Suppose that $F^1_1 = H^D$ and $F^3_{\ell_3} = H^D$ (other cases are analogous).
Since $L\inv$ is scalar, the displayed expression acts pointwise on the spinor factor of $S \otimes E(k)$ as a sum of operators of the form
\begin{equation*}
    P_{-4i} \eta^{ab} c(\eta^1_1) P_{i\mu^1_1} \cdots P_{i\mu^1_{\ell_1-1}} c(\eta^1_{\ell_1}) c_a c(\eta^2_1) P_{i\mu^2_1} \cdots c(\eta^2_{\ell_2}) c_b c(\eta^3_1) P_{i\mu^3_1} \cdots c(\eta^3_{\ell_3}) P_{-4i} \tag{$\star$}
\end{equation*}
for some eigenvalues $\mu^1_j, \mu^3_j \in \Spectrum^+, \mu^2_j \in \Spectrum^-$ and for some $\eta^i_j \in \Gamma(V \otimes \ad E(k))$.

Observe that each $P_{i \mu^i_j}$ is a polynomial in $c(\omega)$; for example, $P_0 = \frac{1}{16} c(\omega)^2 + 1$ (acting on $S^+$) and $P_{2i} = \frac{1}{4i} c(\omega) + \frac{1}{2}$ (acting on $S^-$).
Thus any operator $(\star)$ is itself a sum of operators of the form
\[
    P_{-4i} \eta^{ab} c(M_1) \cdots c(M_r) c_a c(M_{r+1}) \cdots c(M_s) c_b c(M_{s+1}) \cdots c(M_t) P_{-4i}
\]
where at least $t - \ell$ of the $M_i$ are contained in $\langle I, \omega\rangle$.
Such operators vanish by Lemma~\ref{lem:matrixvan}.
\end{proof}

\begin{corollary}
    \label{cor:vancor}
    For all $\ell \ge 0$, $H'_\ell\subset \{ A \mid A \in \mathcal{P}_\ell(V), A^t = -A \}$.
\end{corollary}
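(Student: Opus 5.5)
This is linear algebra layered on Lemma~\ref{lem:AHfromVAN}. Write $\mathcal{A} := \{ A \in \mathcal{P}_\ell(V) \mid A^t = -A\}$ for the skew-symmetric part of $\mathcal{P}_\ell(V)$. Since $H'_\ell$ is defined as the orthogonal complement of $H_\ell$ inside $\Lambda^2(\real^8)^*$ (the skew-symmetric matrices), it is enough to prove the reverse inclusion
\[
\mathcal{A}^{\perp} \cap \Lambda^2(\real^8)^* \subset H_\ell ,
\]
where the orthogonal complement is taken inside the skew-symmetric matrices. Taking orthogonal complements in $\Lambda^2(\real^8)^*$ then gives $H'_\ell = H_\ell^\perp \subset \mathcal{A}$, because $\mathcal{A}$ is a subspace of the finite-dimensional inner product space $\Lambda^2(\real^8)^*$ and so $(\mathcal{A}^\perp)^\perp = \mathcal{A}$.

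So let $\eta$ be skew-symmetric with $\eta \perp \mathcal{A}$. We want to upgrade this to $\langle \eta, \mathcal{P}_\ell(V)\rangle = 0$, so that Lemma~\ref{lem:AHfromVAN} applies and gives $\eta \in H_\ell$. Take any $B \in \mathcal{P}_\ell(V)$ and split it into symmetric and skew parts,
\[
B = B_s + B_a, \qquad B_s = \tfrac12 (B + B^t), \qquad B_a = \tfrac12 (B - B^t).
\]
For the inner product $\langle A, B\rangle = \Tr(A^t B)$, a skew matrix is orthogonal to every symmetric matrix, so $\langle \eta, B\rangle = \langle \eta, B_a\rangle$.

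The step that needs checking is $B_a \in \mathcal{A}$, which amounts to showing that $\mathcal{P}_\ell(V)$ is closed under transposition. Every element of $V$ is skew-symmetric, and so is $\omega$. Hence the transpose of a generator is
\[
\bigl(\omega^{\epsilon_0} v_1 \omega^{\epsilon_1} \cdots v_j \omega^{\epsilon_j}\bigr)^t = \pm\, \omega^{\epsilon_j} v_j \cdots v_1 \omega^{\epsilon_0},
\]
which is again a generator of the same form, with at most $\ell$ factors from $V$, up to sign. By linearity $B^t \in \mathcal{P}_\ell(V)$, so $B_a \in \mathcal{P}_\ell(V)$, and since $B_a$ is skew, $B_a \in \mathcal{A}$. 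It follows that $\langle \eta, B\rangle = \langle \eta, B_a\rangle = 0$.

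Since $B$ was arbitrary, $\langle \eta, \mathcal{P}_\ell(V)\rangle = 0$, and Lemma~\ref{lem:AHfromVAN} gives $\eta \in H_\ell$. This is exactly the reverse inclusion above, so $H'_\ell \subset \mathcal{A}$.

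The only point that is not purely formal is the transposition closure of $\mathcal{P}_\ell(V)$, which uses that $V$ consists of 2-forms, i.e.\ skew-symmetric matrices, and that $\omega$ is skew. If $V$ were allowed to contain non-skew matrices, the symmetric/skew splitting would fail, and one would instead have to pass to the span of $\mathcal{P}_\ell(V) + \mathcal{P}_\ell(V)^t$.
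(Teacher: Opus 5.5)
Your proof is correct and follows the route the paper intends: the paper states this corollary without proof, as an immediate consequence of Lemma~\ref{lem:AHfromVAN} obtained by taking orthogonal complements. Your check that $\mathcal{P}_\ell(V)$ is closed under transposition is exactly the detail the paper leaves implicit. It is what guarantees that the skew-symmetric projection of $\mathcal{P}_\ell(V)$ coincides with $\{A \in \mathcal{P}_\ell(V) \mid A^t = -A\}$.
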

Note that $\mathcal{P}_0(V) = \vspan\{I, \omega\}$.
As a consequence, $H'_0 \subset \langle \omega \rangle$.
Likewise, we have the following second-order result in the $\Spin(7)$ case.

\begin{theorem}[Second-order asymptotic holonomy for $\Spin(7)$ instantons]
    \label{thm:spin7hol}
    Suppose that $F_E \in \Omega^2_{21}(\ad E)$, and suppose the polarising line bundle $L$ is a $\Spin(7)$ instanton.
    It follows that $H'_1 \subset \Lambda^2_{21} (\real^8)^*$.
\end{theorem}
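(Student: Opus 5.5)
The plan is to reduce the statement to the algebraic criterion of Lemma~\ref{lem:AHfromVAN} with $V = \Lambda^2_{21}$. Since $H'_1$ is the orthogonal complement of $H_1$ inside $\Lambda^2(\real^8)^* = \Lambda^2_7 \oplus \Lambda^2_{21}$, it suffices to show $\Lambda^2_7 \subset H_1$. For that it is enough to check, for each $j$, that
\[
    \langle \gamma^j, \mathcal{P}_1(\Lambda^2_{21}) \rangle = 0 .
\]

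First I check that the hypothesis of Lemma~\ref{lem:AHfromVAN} holds for the relevant bundle. The bundle entering the Nahm transform is $E(k)\otimes P_t$, whose curvature is $F_E \otimes 1 + k F_L \otimes 1$; the flat factor $P_t$ contributes nothing. The term $F_E$ lies in $\Omega^2_{21}(\ad E)$ by hypothesis. The term $F_L = -i\omega$ is a constant multiple of the identity endomorphism tensored with $\omega$, and $\omega \in \Lambda^2_{21}$ because $L$ is a $\Spin(7)$ instanton. So the curvature takes values in $V = \Lambda^2_{21}$, and in particular $\omega \in V$.

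Next I enumerate the generators of $\mathcal{P}_1(V)$ and check orthogonality to $\gamma^j$ for each.
\begin{enumerate}
    \item $I$: we have $\langle \gamma^j, I\rangle = \Tr((\gamma^j)^t) = 0$, since $\gamma^j$ is skew.
    \item $\omega$ and $v \in V$: both lie in $\Lambda^2_{21} = M_{21}$, which is orthogonal to $M_7$.
    \item $\omega v$ and $v\omega$: these are products of two elements of $M_{21} = \so(7)$. The multiplication map $M_{21}\otimes M_{21} \to M_{8\times 8}$ is $\Spin(7)$-equivariant. As $\so(7)$-representations, $\Lambda^2(\so(7)) = \mathbf{21}\oplus\mathbf{189}$ and $\mathrm{Sym}^2(\so(7)) = \mathbf{1}\oplus\mathbf{27}\oplus\mathbf{35}\oplus\mathbf{168}$, so $\mathbf{21}\otimes\mathbf{21}$ contains no $\mathbf{7}$. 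By Schur's lemma the image of the product map has zero component in $M_7$ in the decomposition~\eqref{eq:matdecomp}, so $\langle \gamma^j, \omega v\rangle = \langle \gamma^j, v\omega \rangle = 0$.
    \item $\omega v \omega$: use $\omega^t = -\omega$ and $\omega^2 = -I$ to move the outer factors onto $\gamma^j$, giving
    \[
        \langle \gamma^j, \omega v \omega\rangle = \pm \langle \omega \gamma^j \omega^{-1}, v\rangle .
    \]
    Since $\omega \in \so(7)$ and $\omega^2=-I$, we have $\omega = \exp(\tfrac{\pi}{2}\omega)$, so $\omega$ lies in the group $\Spin(7)\subset SO(8)$. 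Conjugation by it therefore preserves $M_7$, so $\omega\gamma^j\omega^{-1} \in M_7 \perp V$.
\end{enumerate}
Finally, the cases $\epsilon_i = 0$ (no $\omega$ factors) are already covered by $I$ and $v$ above.

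With these checks, Lemma~\ref{lem:AHfromVAN} gives $\gamma^j \in H_1$ for every $j$. Hence $\Lambda^2_7\subset H_1$, and so $H'_1 \subset \Lambda^2_{21}$.

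The main obstacle I expect is the representation-theoretic claim in item 3, that the product of two elements of $\so(7)$ has no $M_7$ component. If the Schur argument feels too slick, I would verify it directly: $\Tr(\gamma^j \gamma^{ab}\gamma^{cd})$ is the trace of a product of an odd number of $\gamma$'s (after cancelling repeated indices, at most three distinct ones remain), and such traces vanish except for the identity, which cannot occur because the total index count is odd.
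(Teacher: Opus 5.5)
Your proposal is correct and follows the paper's route: you apply Lemma~\ref{lem:AHfromVAN} (equivalently Corollary~\ref{cor:vancor}) with $V = M_{21}$, using $\omega \in \Lambda^2_{21}$ from the hypothesis on $L$, and show that $\mathcal{P}_1(M_{21})$ has no $M_7$ component. The paper simply asserts this as $\mathcal{P}_1(M_{21}) \subseteq M_1 \oplus M_{21} \oplus M_{35}$, whereas you justify it generator by generator via Schur and the observation that $\omega \in \Spin(7)$.

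One small slip, in your fallback check only. Up to five distinct indices can survive in $\gamma^j\gamma^{ab}\gamma^{cd}$, not three. Also, "odd" alone does not exclude $\pm I$ here, since $\gamma^{1\cdots 7} = \pm I$. The conclusion still holds, because products of $1$, $3$ or $5$ distinct $\gamma$'s are traceless.
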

\begin{proof}
    Since $E(k)$ is a $\Spin(7)$ instanton, we apply Corollary~\ref{cor:vancor} with $V = M_{21}$, the 21-dimensional irreducible representation of $\Spin(7)$.
    Compute $\mathcal{P}_1(M_{21})\subseteq M_1 \oplus M_{21} \oplus M_{35}$, using the decomposition of $8 \times 8$ real matrices as $\Spin(7)$ representations from Equation~\eqref{eq:matdecomp}.
    In particular, the only skew matrices in $\mathcal{P}_1(M_{21})$ are $M_{21}$; i.e., in the span of the $\gamma^{jk}$.
    The theorem follows by Corollary~\ref{cor:vancor}.
\end{proof}

The next question is whether $H' \subset \Lambda^2_{21} (\real^8)^*$---that is, whether the transformed connection is `asymptotically a $\Spin(7)$ instanton'.
One computes $\mathcal{P}_2(M_{21}) = M_{8 \times 8}(\real)$, so no reduction of holonomy can result from Lemma~\ref{lem:AHfromVAN}.
Indeed, there exist examples with $H' = H'_{2} = \mathfrak{u}(1)^{\oplus 4}$ as Lie algebras and therefore $H'$ not contained in $\Lambda^2_{21} (\real^8)^*$ for \emph{any} $\Spin(7)$ structure on the dual torus, as the rank of $\Spin(7)$ is 3.
\begin{example}
    \label{ex:3456}
    Let $L_{\pm}$ be the line bundles on the square torus with first Chern class $\gamma^{34} \pm \gamma^{56}$, and let the twisting bundle for the Nahm transform have first Chern class $\gamma^{12}$.
    For the sum $E = L_- \oplus L_+$ we have $H' = \langle \gamma^{12}, \gamma^{34}, \gamma^{56}, \gamma^7 \rangle$.
\end{example}
\begin{proof}
    Using that $\gamma^{34} \pm \gamma^{56}$ commutes with $\gamma^{12}$ and $(\gamma^{34} \pm \gamma^{56})^\ell = -4(\gamma^{34} \pm \gamma^{56})^{\ell-2}$ for $\ell \ge 3$, we compute
    \begin{align*}
        \mathcal{P}_1(\langle \gamma^{34} \pm \gamma^{56}\rangle) &= \vspan\{I, \gamma^{12}, \gamma^{34} \pm \gamma^{56}, \gamma^{1234} \pm \gamma^{1256}\} \\
        \mathcal{P}_\ell(\langle\gamma^{34} \pm \gamma^{56}\rangle) &= \vspan\{I, \gamma^{12}, \gamma^{34} \pm \gamma^{56}, \gamma^{3456}, \gamma^{1234} \pm \gamma^{1256}, \gamma^{7}\}, \qquad\text{for }\ell \ge 2.
    \end{align*}
    The matrices $\gamma^{ijk\ell} \in M_{35}$ are traceless symmetric.
    Thus $H' \subseteq \vspan \{ \gamma^{12}, \gamma^{34}, \gamma^{56}, \gamma^7 \}$.

    It remains to show that this inclusion is equality.
    This could be checked directly from the definitions of the $u_i$, as many computations simplify given the constant curvature condition and the fact that $L_{\pm}$ has curvature of type $(1,1)$ with respect to the complex structure $\gamma^{12}$.
    Alternatively, we may check (using $\gamma^7$ for example) that in particular $\int_{T^8} \Tr(\gamma^7 \lrcorner \wh{F}) = -2\pi i\langle \gamma^{7}, c_1(\wh{E})\rangle \neq 0$.
    By the index theorem,
    \begin{align*}
        c_1(\wh{E}) &= -\star \frac{1}{6} (k\gamma^{12} + \gamma^{34} \pm \gamma^{56})^3 \\
                    &= - k  (\gamma^{12} \pm \gamma^7) - \frac{k^2}{2} (\gamma^{34} \pm \gamma^{56}) - k^3 \gamma^{12},
    \end{align*}
    where $\star$ is the Hodge star (and again, we identify forms on $T^8$ with forms on $\wh{T^8}$).
    In particular, $\langle \gamma^7, c_1(\wh{E}) \rangle \neq 0$ as desired.
\end{proof}
\printbibliography

\end{document}